\numberwithin{equation}{section}
\newtheorem{theorem}{Theorem}[section]
\newtheorem{corollary}[theorem]{Corollary}
\newtheorem{lemma}[theorem]{Lemma}
\newtheorem{proposition}[theorem]{Proposition}
\newcommand{\la}{\langle}
\newcommand{\ra}{\rangle}
\renewcommand{\O}{{\mathcal{O}}}
\newcommand{\R}{{\mathbb{R}}}
\newcommand{\cd}{{\,\cdot\,}}
\newcommand{\tC}{{\tilde{C}}}
\newcommand{\uu}{{\underline{u}}}
\newcommand{\du}{{\partial_u}}
\newcommand{\duu}{{\partial_\uu}}
\newcommand{\duc}{{\partial_{u_c}}}
\newcommand{\duuc}{{\partial_{\uu_c}}}
\newcommand{\ttau}{{{\tilde{\tau}}}}
\newcommand{\tR}{{\tilde{R}}}
\newcommand{\lc}{\lesssim}
\begin{document}
\bibliographystyle{plain}

\title
{
The sharp lifespan for a system of multiple speed wave equations:
Radial case
}

\dedicatory{Dedicated to Professor Thomas C.~Sideris on the occasion
  of his 70th birthday}

\author{Marvin Koonce}
\author{Jason Metcalfe}
\address{Department of Mathematics, University of North Carolina, Chapel Hill}
\email{mkoonce@unc.edu, metcalfe@email.unc.edu}

\keywords{wave equations, local energy estimates, multiple speed,
  almost global existence}

\thanks{The authors gratefully acknowledge the partial support from
  NSF grants DMS-2054910 and DMS-2135998.  The results contained
  herein were developed as a part of the Undergraduate Honors Thesis
  of MK.  JM thanks Yizhou Gu and Xiao-Ming Porter who completed some
  calculations related to this material as a part of their
  undergraduate research.}

 \begin{abstract}
Ohta examined a system of multiple speed wave equations with small
initial data and demonstrated a finite time blowup.  We show, in the
radial case, that the same system exists almost globally with the same
lifespan as a lower bound.  To do this, we use 
integrated local energy estimate, $r^p$ weighted local energy estimates, the Morawetz estimate that results
from using the scaling vector field as a multiplier, and mixed speed
ghost weights. 
 \end{abstract}

\maketitle


\section{Introduction}
This article focuses on establishing the sharp lifespan, in the radial
case, for a multiple speed system of wave equations with small initial
data introduced in \cite{Ohta}.  Based on \cite{Lindblad}, we know
that there exists a constant $c$ so 
that quasilinear wave equations of the form
\begin{equation}\label{loworder}
  \begin{cases}
    \Box u = Q(u,\partial u, \partial^2 u),\quad (t,x)\in
    \R_+\times\R^3,\\
    u(0,\cd)=\varepsilon f,\quad \partial_t u(0,\cd)=\varepsilon g,
  \end{cases}\end{equation}
where $f, g\in C_c^\infty(\R^3)$ and $Q$ vanishes to second order at
the origin, have solutions on $[0, T_*]$ with $T_*\ge c/\varepsilon^2$ if
$\varepsilon$ is sufficiently small.  With the additional assumption
that $(\partial_u^2 Q)(0,0,0)=0$, which in essence rules out $u^2$
terms and leaves $u\partial u$ nonlinearities at the lowest order,
almost global existence $T_*\ge \exp(c/\varepsilon)$ was proved.
The latter result was partly extended to systems of equations in \cite{M-Rhoads}.

In the case of multiple speed systems of wave equations
\[\Box_{c_I} u^I = Q^I(\partial u, \partial^2 u),\quad (t,x)\in
  \R_+\times \R^3,\]
almost global existence was established in \cite{KlSid}.  Here
\[\Box_c = \partial_t^2-c^2\Delta\]
denotes the d'Alembertian at speed $c$.
Like in the
single speed case \cite{Klainerman}, \cite{Christodoulou}, if the
nonlinearity satisfies a null condition, then global existence can be
recovered.  In the semilinear case, if
\[Q^I(\partial u) = B^{I,\alpha\beta}_{JK} \partial_\alpha u^J
  \partial_\beta u^K + C(\partial u),\]
where $C$ vanishes to third order and repeated variables are
implicitly summed using the Einstein convention, the null condition is
only necessary when $c_I=c_J=c_K$.  See, e.g., \cite{Sideris-Tu},
\cite{Sideris}, \cite{Sogge}, \cite{mns2, mns1}.
The reason that it suffices to only have an assumption on the same
speed interactions is 
that solutions to the wave equation enjoy additional decay off of the light cone, and when there are differing speeds
one of the factors will be away from its light cone thus contributing more rapid decay.

For multiple speed analogs of \eqref{loworder} with $(\partial_u^2
Q)(0,0,0)=0$, one may wonder if global existence can be recovered provided no
quadratic nonlinear term in the equation for $u^I$ has factors that
are both at the same speed $c_I$.  More precisely, if we truncate to
quadratic level for semilinear equations
\[\Box_{c_I}u^I = A^{I,\alpha}_{JK} u^J \partial_\alpha u^K +
  B^{I,\alpha\beta}_{JK} \partial_\alpha u^J \partial_\beta u^K,\]
it may be reasonable to expect global existence provided that
\begin{equation}\label{msseparation}A^{I,\alpha}_{JK}=0=B^{I,\alpha\beta}_{JK},\quad\text{whenever }
  c_I=c_J=c_K.\end{equation}

In a somewhat surprising result, \cite{Ohta} demonstrated that
\eqref{msseparation} is not a sufficient condition for global
existence for sufficiently small initial data.  Indeed, for $c>1$, the
following system was considered.
\begin{equation}
  \label{ohta_system}
  \begin{cases}
    \Box v = w \partial_t v,\quad\quad (t,x)\in \R_+\times\R^3,\\
    \Box_c w = (\partial_t v)^2,\\
    v(0,\cd)=\varepsilon v_{(0)},\quad \partial_t v(0,\cd)=\varepsilon
    v_{(1)},\\
    w(0,\cd)=\varepsilon w_{(0)},\quad \partial_t w(0,\cd)=\varepsilon w_{(1)}.
  \end{cases}
\end{equation}
It was established that there are smooth, compactly supported initial
data $v_{(j)}, w_{(j)}$ and a constant $c_0$ so that the lifespan
$T_*$ satisfies $T_* \le \exp(c_0/\varepsilon^2)$.

The current study seeks to show the reverse inequality.  That is, for
any data $v_{(0)}, v_{(1)}, w_{(0)}, w_{(1)}$ that are smooth and
compactly supported, we seek to show that there is a constant $\tilde{c}$ so
that $T_\varepsilon \ge \exp(\tilde{c}/\varepsilon^2)$.
Without loss of generality, we take the
supports of the data to be contained within the unit ball $\{|x|\le 1\}$.  We
shall also use time translation symmetry and  take the initial data on the time slice $t=4$.
In the current article we consider only the radial case:
$v_{(j)}(x)=v_{(j)}(|x|)$, $w_{(j)}(x)=w_{(j)}(|x|)$.

Using the assumption of radial symmetry, we can reduce the question at
hand to a problem in $(1+1)$-dimensions.  Indeed, by conjugating, we have
\[\Box_c w(t,r) = r^{-1}(\partial_t^2-c^2 \partial_r^2)(rw).\]
If we set $V(t,r)=rv(t,r)$ and $W(t,r)=rw(t,r)$, we can instead seek
sufficiently regular solutions to the $(1+1)$-dimensional
initial-value boundary-value problem
\begin{equation}
  \label{ohta_1d_system}
  \begin{cases}
    \Box V = r^{-1} W \partial_t V,\quad\quad (t,r)\in \R_+\times\R_+,\\
    \Box_c W =r^{-1} (\partial_t V)^2,\\
    W(t,0)=V(t,0)=0,\quad\text{ for all } t,\\
    V(4,\cd)=\varepsilon V_{(0)},\quad \partial_t V(4,\cd)=\varepsilon
    V_{(1)},\\
    W(4,\cd)=\varepsilon W_{(0)},\quad \partial_t W(4,\cd)=\varepsilon W_{(1)}.
  \end{cases}
\end{equation}
Here (when applied to $V, W$) we understand $\Box_c = \partial_t^2 -
c^2\partial_r^2$ to be the $(1+1)$-dimensional d'Alembertian.

If we extend $V_{(j)}$, $W_{(j)}$ in an odd fashion,
$V_{(j)}(-r)=-V_{(j)}(r)$, $W_{(j)}(-r)=-W_{(j)}(r)$, 
then it is straightforward to check that $V, W$ extend oddly, and we
can instead seek to solve
\begin{equation}
  \label{ohta_1d_system_odd}
  \begin{cases}
    \Box V = x^{-1} W \partial_t V,\quad\quad (t,x)\in \R_+\times\R,\\
    \Box_c W =x^{-1} (\partial_t V)^2,\\
    V(4,\cd)=\varepsilon V_{(0)},\quad \partial_t V(4,\cd)=\varepsilon
    V_{(1)},\\
    W(4,\cd)=\varepsilon W_{(0)},\quad \partial_t W(4,\cd)=\varepsilon W_{(1)}.
  \end{cases}
\end{equation}

The main theorem is a statement of almost global existence for \eqref{ohta_1d_system_odd}.
\begin{theorem}\label{main_theorem} 
Suppose that $V_{(j)}, W_{(j)}\in C^\infty_c(\R)$ and
$V_{(j)}(x)=-V_{(j)}(-x), W_{(j)}(x)=-W_{(j)}(-x)$ for $j=0,1$. Then
there exist constants $\tilde{c}, \varepsilon_0 >0$ such that when
$0<\varepsilon < \varepsilon_0$, \eqref{ohta_1d_system_odd} has a unique
solution $(V,W)\in C^\infty([4,T_\varepsilon]\times\R),$ where  
\begin{equation} 
    \label{lifespan} 
    T_\varepsilon = \exp(\tilde{c}/\varepsilon^2).
  \end{equation}
\end{theorem}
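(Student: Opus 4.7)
The plan is to establish Theorem~\ref{main_theorem} via a continuity argument. For the radial, $(1+1)$-dimensional problem \eqref{ohta_1d_system_odd} we take commutator fields $Z\in\{\partial_t,\partial_x,S\}$, where $S=t\partial_t+x\partial_x$ is the scaling vector field, and at a sufficiently high regularity $N$ define a master quantity
\[
\mathcal{E}_N(t) = \sum_{|\alpha|\le N}\sum_{U\in\{V,W\}}\bigl(\|\partial Z^\alpha U(t)\|_{L^2_x}^2 + \cdots\bigr),
\]
augmented by (i) Keel--Smith--Sogge type integrated local energy norms produced by radial multipliers $f(x)\partial_x$ with $f'>0$; (ii) $r^p$-weighted local energy norms of Dafermos--Rodnianski type for $0<p<2$; (iii) the Morawetz norm produced by using $S$ as a multiplier; and (iv) mixed-speed ghost-weight norms of Alinhac type, in which distinct weights $e^{-q(t-x/c_I)_+}$ are adapted to the speed $c_I\in\{1,c\}$ of each component. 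The bootstrap hypothesis is $\mathcal{E}_N(t)\le A\varepsilon$ on $[4,T]$, and the goal is to improve this to $A\varepsilon/2$.

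The first step is to record the multiplier identities for $\Box_{c_I}$ at general speed, checking that boundary contributions at $x=0$ vanish by the odd extensions of $V,W$ and that $[S,\Box_{c_I}]=-2\Box_{c_I}$ produces only acceptable scalar multiples of the nonlinearity on the right-hand side when commuting. The second step is to convert the bootstrap bound into pointwise decay via weighted Klainerman--Sobolev estimates adapted to the reduced one-dimensional setting, including control of $V/x$ and $W/x$ from the assumed vanishing at the origin. The central mechanism behind the anomalously long lifespan is that $W$ is driven by the quadratic source $x^{-1}(\partial_t V)^2$, and therefore obeys an enhanced bound of order $\varepsilon^2$ rather than $\varepsilon$; consequently the $W\partial_t V/x$ term in the $V$-equation behaves effectively like a cubic nonlinearity, which is what pushes the naive $O(1/\varepsilon^2)$ lifespan up to $\exp(\tilde c/\varepsilon^2)$.

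The main obstacle, which I would address last, is to close the bootstrap for the cross-speed interactions by exploiting the mixed-speed ghost weights. On the cone $x=ct$ of the $\Box_c$ equation for $W$, the factors $\partial_t V$ propagate at speed $1$, so they are a distance $\sim t$ from their own light cone $x=t$, and the $W$-adapted ghost weight converts this separation into an integrable gain that compensates the quadratic source. A symmetric mechanism in the $V$-equation, using the $V$-adapted ghost weight and the speed-$c$ propagation of $W$, handles the $W\partial_t V/x$ term. Combining these pointwise decays, the integrated local energy bounds, and the mixed-speed ghost gains, the spacetime integrals produced by the two nonlinearities are controlled by a quantity of the form $C(A\varepsilon)^2(\log T)\,\mathcal{E}_N(t)$ modulo errors absorbed by appropriate choices of the weight parameters $p$ and $q$; a Gr\"onwall inequality then produces a doubling time of order $\exp(\tilde c/\varepsilon^2)$, which is precisely the claimed lifespan. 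The delicate part is matching the logarithmic losses coming from the KSS weights with the gains from the $r^p$ and ghost weights, and handling the singular $x^{-1}$ factor at the origin, which is harmless because $V,W$ vanish there thanks to the odd extension.
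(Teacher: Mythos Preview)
Your proposal has the right collection of ingredients (integrated local energy, $r^p$-weights, ghost weights, scaling multiplier), but the central heuristic you identify as the mechanism is incorrect, and two of your technical choices would not survive the actual argument.

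First, the claim that ``$W$ obeys an enhanced bound of order $\varepsilon^2$ rather than $\varepsilon$'' is simply false for \eqref{ohta_1d_system_odd}: $W$ carries initial data $\varepsilon W_{(0)},\varepsilon W_{(1)}$, so its free part is already $O(\varepsilon)$, and $W\partial_t V/x$ is genuinely quadratic, not effectively cubic. The $\exp(\tilde c/\varepsilon^2)$ lifespan does \emph{not} come from a hidden cubic structure. In the paper the mechanism is that the master quantity (a sum of weighted space-time $L^2$ norms, not a time-slice energy) satisfies an \emph{algebraic} inequality of the form
\[
M^2 \le (C_0\varepsilon)^2 + C\varepsilon^2(\log T)^{1/2}M,
\]
where one factor of $\varepsilon$ comes from the $L^\infty$ decay bound on the low-order factor, the other from the bootstrap $M\le 2C_0\varepsilon$ on the second factor, and the single $(\log T)^{1/2}$ arises from converting one $\ell^2$ dyadic sum into an $\ell^\infty$. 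This closes precisely when $\log T\lesssim 1/\varepsilon^2$. Your stated bound ``$C(A\varepsilon)^2(\log T)\,\mathcal{E}_N(t)$'' followed by Gr\"onwall does not yield this; with a full $\log T$ loss you recover only $\exp(c/\varepsilon)$.

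Second, the paper deliberately commutes \emph{only} with $S$, not with $\partial_t,\partial_x$. The reason is the singular weight $x^{-1}$ in the reduced nonlinearity: $[\partial_x,x^{-1}]=-x^{-2}$ is strictly worse at the origin, whereas $S(x^{-1})=-x^{-1}$ preserves the singularity. Your commutator set $\{\partial_t,\partial_x,S\}$ would force you to control $x^{-2}$-weighted terms that the available Hardy inequalities do not handle. Relatedly, the odd extension does make $V,W$ vanish at $x=0$, but that by itself does not neutralize the iterated commutator singularities; the paper instead uses carefully designed weights $(\sigma_R(r))^{1/2}$ whose derivative gains $r^{-1/2}$ near the origin.
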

As indicated above, Theorem 1.1 leads to the following corollary, showing that, in the radial case, the upper bound on the lifespan in \cite{Ohta} is sharp.
\begin{corollary}\label{main}
  Suppose that $v_{(j)}, w_{(j)}\in C^\infty_c(\R^3)$, and that
  $v_{(j)}(x)=v_{(j)}(|x|), w_{(j)}(x)=w_{(j)}(|x|)$ for $j=0,1$. Then
  there exist constants $\tilde{c}, \varepsilon_0 >0$ such that when
  $0<\varepsilon < \varepsilon_0$, \eqref{ohta_system} has a unique
  solution $(v,w)\in C^\infty([0,T_\varepsilon]\times\R^3)$ with
  $T_\varepsilon$ as in \eqref{lifespan}.
\end{corollary}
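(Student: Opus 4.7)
The corollary is essentially a packaging of Theorem~\ref{main_theorem} back into the original 3D radial setting, so the plan is to run the $V=rv$, $W=rw$ reduction in reverse and verify smoothness at the origin. First, I would use time translation symmetry to move the initial data from $t=0$ to $t=4$, so the hypotheses of Theorem~\ref{main_theorem} apply. Defining $V_{(j)}(r) := r\,v_{(j)}(r)$ and $W_{(j)}(r) := r\,w_{(j)}(r)$ for $r\ge 0$ and extending them to $\R$ by odd reflection, I would check that the resulting functions lie in $C_c^\infty(\R)$: since $v_{(j)}, w_{(j)}$ are smooth radial functions on $\R^3$, classical arguments (e.g.\ the Whitney-type characterization that a smooth even function of $|x|^2$ on $\R^3$ is the restriction of a smooth even function of $r$ on $\R$) show that $r\,v_{(j)}(r)$ and $r\,w_{(j)}(r)$ extend to smooth odd functions on $\R$ with compact support.

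Next, I would invoke Theorem~\ref{main_theorem} to produce $(V,W)\in C^\infty([4,T_\varepsilon]\times\R)$ solving \eqref{ohta_1d_system_odd} with this data, where $T_\varepsilon = \exp(\tilde c/\varepsilon^2)$. The crucial intermediate step is to show that this solution remains odd in $x$ for all $t\in[4,T_\varepsilon]$. This follows from the uniqueness clause of Theorem~\ref{main_theorem}: the pair $\tilde V(t,x) := -V(t,-x)$, $\tilde W(t,x) := -W(t,-x)$ satisfies the same system (because $x^{-1}$ is odd and the nonlinearities are bilinear and either of the form $W\partial_t V$ or $(\partial_t V)^2$), and has the same initial data, so $\tilde V = V$ and $\tilde W = W$.

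Using oddness, I would then define $v(t,x) := V(t,|x|)/|x|$ and $w(t,x) := W(t,|x|)/|x|$ on $[4,T_\varepsilon]\times\R^3$ and verify these are smooth radial solutions of \eqref{ohta_system}. Smoothness at $|x|=0$ is where the oddness pays off: a smooth odd function $V$ on $\R$ factors as $V(x) = x\,g(x)$ with $g$ smooth and even, hence $g(r)=h(r^2)$ for a smooth $h$ (Whitney), so $V(|x|)/|x| = h(|x|^2)$ extends to a smooth function on $\R^3$. The PDE itself is verified using the conjugation identity
\[
\Box_c w(t,r) \;=\; r^{-1}(\partial_t^2 - c^2\partial_r^2)(rw)
\]
for radial functions, which is exactly how the 1D reduction was derived; the nonlinearities transform correctly because $r^{-1} W\partial_t V = w\,\partial_t(rv)/1 - \ldots$, i.e.\ the factors of $r$ distribute so that the 1D equation for $(V,W)$ becomes the 3D equation for $(v,w)$. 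Finally, I would time-translate back from $t=4$ to $t=0$ and recover the uniqueness for the 3D problem from the uniqueness in the 1D theorem composed with the bijection $v\leftrightarrow rv$ on radial functions.

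The only nontrivial point, and the step I would be most careful about, is the smoothness of $v,w$ at the spatial origin; everything else is bookkeeping. The oddness preservation argument (a one-line uniqueness invocation) is what makes this smoothness free, so establishing it cleanly is the backbone of the proof.
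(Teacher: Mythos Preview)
Your proposal is correct and follows exactly the approach the paper indicates: the paper does not give a separate proof of the corollary but simply states that it follows from Theorem~\ref{main_theorem} via the radial reduction described in the introduction (conjugation by $r$, odd extension, and time translation). You have supplied more detail than the paper does---in particular the oddness-preservation via uniqueness and the Whitney-type argument for smoothness of $V(|x|)/|x|$ at the origin---which is appropriate care for the one genuinely nontrivial point the paper leaves implicit.
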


Our proof relies on the method of invariant vector fields with
adaptations to a restricted set of vector fields, which is
necessitated as the Lorentz boosts have an associated speed and only
commute with the d'Alembertian of the same speed.  This was pioneered
in, e.g., \cite{KlSid}, \cite{KSS, KSS3}, \cite{MS3, MS, ms_mathz},
\cite{mns2, mns1}.  The method is further simplified in the radial case as
the only relevant vector fields are derivatives and the scaling vector
field.  And in the study at hand, we will rely solely on the scaling
vector field to avoid issues with commuting derivatives with the
singular weight introduced in the one dimensional reduction.

Like \cite{KSS} and \cite{MS}, we will call upon a class of integrated
local energy estimates in order to prove long-time existence.  These
estimates go back to the seminal work \cite{Morawetz}.  In the
current setting, a number of variants are used.  These include the
$r^p$-weighted estimates of \cite{dafermos_rodnianski}, ghost weighted
estimates from \cite{Alinhac_ghostweight}, and integrated estimate
variants of \cite{Morawetz_scaling} where the scaling vector field is
used as a multiplier.

To obtain the requisite decay, rather
than using the classical Klainerman-Sobolev estimate \cite{klainermanSob}, which
introduces Lorentz 
boosts, or the weighted Sobolev estimate \cite{klainermanSob}, which provides decay in
$|x|$ but fails to capture the added decay off of the light cone, as
was done in \cite{KSS} and \cite{MS}, we
will instead use a class of space-time Klainerman-Sobolev estimates of
\cite{MTT}.  The space-time nature of these decay bounds meshes well
with the integrated local energy estimates. 

In the estimates that we prove, we allow for ghost weights that are
associated to a different speed than the equation.  This works in the
non-radial case (see the forthcoming work \cite{bechtold_m}) provided that the speed within the ghost weight
exceeds that of the speed of the equation.  In the radial case,
however, no restriction on the speeds is needed.  This is the
primary place where we rely upon the radiality assumption.  We anticipate
examining the general case in a future study.

After posting this result, we learned of the paper \cite{katayama}.
Corollary \ref{main} is a direct consequence of the main result of
\cite{katayama}.  We thank Mengyun Liu and Chengbo Wang for alerting
us to this.  Our physical space techniques are quite distinct from the
approach of \cite{katayama}, which is rooted in the fundamental
solution.  We believe the current methods present a path toward
dropping the assumption of radial symmetry and to the study of these
equations in the presence of background geometry.  As such, we believe
that the proof contained herein is of independent interest.

\medskip
\section{Notation and Decay Estimates}

For $x\in \R$, we denote $r=|x|$.
We will use the standard null coordinates
\[u=t-r,\quad \uu=t+r,\quad \partial_u =
  \frac{1}{2}(\partial_t-\partial_r),\quad \partial_{\uu} =
  \frac{1}{2}(\partial_t+\partial_r).\]
The above correspond to speed $1$, but at wave speed $c$, we will
instead have
\[u_c = ct-r,\quad \uu_c=ct+r,\quad \partial_{u_c} =
  \frac{1}{2c}(\partial_t-c\partial_r),\quad \partial_{\uu_c} = \frac{1}{2c}(\partial_t+c\partial_r).\]
We denote the scaling vector field by
\begin{equation}\label{scaling}S=t\partial_t+r\partial_r = u\partial_u + \uu\partial_\uu =
  u_c\partial_{u_c} +\uu_c\partial_{\uu_c}.
\end{equation}
It will be important for later purposes to note
\[[\duc, S]=\duc,\quad [\duuc,S]=\duuc, \quad [\Box, S] = 2\Box.\]
For $k\in \mathbb{N}$, we use the notation
\[|S^{\le k} w| = \sum_0^k |S^j w|.\]

The space-time Klainerman-Sobolev-type estimates of \cite{MTT} will be
the principal source of decay.  See also \cite{M-Stewart}.  The only
difference herein is allowing for $c\neq 1$
and simplifications that result from the $(1+1)$-dimensional regime.  As
such, we will be brief in the presentation.

If at speed $c$, the compactly supported data (at $t=4$) are taken to
be supported
in the unit ball, the components of the solution will be supported in
$C^c=\{(t,x)\,:\,t\in [4,T_\varepsilon],\,r\le ct-(4c-1)\}$.
We now dyadically decompose $C^c$ in both $t$ and in
either $r$ or $u_c$ depending on the proximity to the light cone.
We first decompose in $t$ and set
\[C^c_\tau = \{(t,r)\in \R_+\times\R_+\,:\, t\in [4,T_\varepsilon]\cap
  [\tau, 2\tau],\, r\le ct-(4c-1)\}.\]
We further break into
\[C^{c,R=1}_\tau= C^c_\tau \cap \{r\le 2\},\quad C^{c,R}_\tau = C^c_\tau
  \cap \{R\le r\le 2R\} \text{ when $1<R$},\]
and
\[
 C^{c,U_c}_\tau=
  C^c_\tau\cap \{U_c\le ct-r\le 2U_c\} \text{ when $1< U_c.$}\]
We finally set
\[C^{c,c\tau/2}_\tau = C^c_\tau \cap \{ct-r\ge c\tau/2\} \cap \{r\ge
  c\tau/2\}.\]
Throughout $\tau, R, U_c, U_1:=U$ will be understood to range over
dyadic values.
This gives
\begin{equation}\label{decomp_c}
  C^c_\tau = \Bigl(\bigcup_{1\le R\le c\tau/4} C^{c,R}_\tau\Bigr) \cup
  \Bigl(\bigcup_{2\le U_c\le c\tau/4} C^{c,U_c}_\tau\Bigr)\cup
  C^{c,c\tau/2}_\tau.
\end{equation}
We will let $\tC^{c,R}_\tau$ and $\tC^{c,U_c}_\tau$ denote slight
enlargements (in both scales) to allow for tails of cutoff functions.
On the components of the decomposition (and their enlargements) we
have:
\[\la r \ra \approx R, \quad t\approx \tau, \quad u_c \approx \tau\quad \text{ on }
  C^{c,R}_\tau,\, 1\le R\le c\tau/4,\]
\[r\approx \tau,\quad t\approx \tau, \quad \la u_c\ra\approx U_c\quad \text{ on }
  C^{c,U_c}_\tau,\, 2\le U_c\le c\tau/4.\]
The remaining region $C^{c,c\tau/2}_\tau$ may be thought of as
either $R=c\tau/2$ or $U_c=c\tau/2$.

The following are space-time analogs of the Klainerman-Sobolev
estimates:
\begin{lemma}[\cite{MTT}]\label{lemma_wklsob} 
  Suppose $W\in C^2([4,T_\varepsilon]\times \R)$ is an odd function.
  If $c>0$, $\tau\ge 4$, $1\le R\le c \tau/2$, $2\le U_c\le c\tau/4$,
  then
  \begin{align}
   \label{WR}\|r^{-\frac{1}{2}}W\|_{L^\infty_tL^\infty_r(C^{c,R}_\tau)} &\lesssim
    \frac{1}{\tau^{\frac{1}{2}} R} \|S^{\le 1}
    W\|_{L^2_tL^2_r(\tC^{c,R}_\tau)} +
    \frac{1}{\tau^{\frac{1}{2}}} \|\partial_r S^{\le 1}
    W\|_{L^2_t L^2_r(\tC^{c,R}_\tau)},\\
   \label{WU} \|W\|_{L^\infty_t L^\infty_r(C^{c,U_c}_\tau)}&\lesssim
                                                   \frac{1}{\tau^{\frac{1}{2}}U_c^{\frac{1}{2}}}
                                                   \|S^{\le 1}
                                                   W\|_{L^2_tL^2_r(\tC^{c,U_c}_\tau)}
                                                   +
                                                   \frac{U_c^{\frac{1}{2}}}{\tau^{\frac{1}{2}}}
                                                   \|\partial_r S^{\le 1} W\|_{L^2_tL^2_r(\tC^{c,U_c}_\tau)}.
  \end{align}
\end{lemma}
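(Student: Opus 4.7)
The plan is to derive both estimates from a product-type Sobolev embedding on each dyadic region (after rescaling to a unit rectangle and cutting off to the enlargement), combined with the commutator identity $[\partial_r,S]=\partial_r$ that allows the cross derivative in Sobolev to be rewritten in terms of $\partial_r S$ modulo a lower-order $\partial_r$ term.

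For \eqref{WR}, I would first set $g=r^{-1/2}W$, regular at $r=0$ by oddness of $W$, and record the identities
\[ Sg = -\tfrac{1}{2}g + r^{-1/2}SW, \qquad \partial_r g = -\tfrac{1}{2}r^{-1}g + r^{-1/2}\partial_r W. \]
On $C^{c,R}_\tau$, change to rescaled coordinates $(\log t, r/R)$ so that the region becomes a unit rectangle with commuting coordinate vector fields $t\partial_t$ and $R\partial_r$. Product Sobolev on the unit rectangle yields
\[ \|g\|_{L^\infty(C^{c,R}_\tau)}^2 \lesssim (\tau R)^{-1}\bigl(\|g\|^2 + \|t\partial_t g\|^2 + \|R\partial_r g\|^2 + \|R\partial_r\,t\partial_t g\|^2\bigr), \]
with the $L^2$-norms taken on $\tC^{c,R}_\tau$ after a smooth cutoff. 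Using $t\partial_t=S-r\partial_r$ and $[\partial_r,S]=\partial_r$ to write $R\partial_r(t\partial_t g)=R\partial_r Sg - R\partial_r(r\partial_r g)$, then substituting the identities above and using $r\approx R$, every summand is bounded by $R^{-1}\|S^{\le 1}W\|_{L^2}^2$ or $R\|\partial_r S^{\le 1}W\|_{L^2}^2$, and \eqref{WR} follows.

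For \eqref{WU}, the same template applies near the cone with $W$ in place of $g$. Rescale to a unit box via $(u_c/U_c,\uu_c/\tau)$ using null coordinates; the coordinate vector fields $u_c\partial_{u_c}$ and $\uu_c\partial_{\uu_c}$ both commute with $S=u_c\partial_{u_c}+\uu_c\partial_{\uu_c}$. Writing $\partial_r=\partial_{\uu_c}-\partial_{u_c}$ and $\partial_t=t^{-1}(S-r\partial_r)$, one finds on the region $r\approx\tau$, $u_c\approx U_c$ the identifications
\[ \|u_c\partial_{u_c}W\|_{L^2} + \|\uu_c\partial_{\uu_c}W\|_{L^2} \lesssim \|SW\|_{L^2} + U_c\|\partial_r W\|_{L^2}. \]
Product Sobolev on the rescaled box then yields
\[ \|W\|_{L^\infty(C^{c,U_c}_\tau)}^2 \lesssim (\tau U_c)^{-1}\bigl(\|W\|^2 + \|SW\|^2 + U_c^2\|\partial_r W\|^2 + U_c^2\|\partial_r SW\|^2\bigr), \]
after handling the cross term via $[\partial_r,S]=\partial_r$ in the same way. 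This rearranges to \eqref{WU}.

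The main obstacle is the mixed-derivative cross term produced by product Sobolev: without the commutator identity $[\partial_r,S]=\partial_r$, one would be forced to retain a $\|\partial_r^2 W\|_{L^2}$ on the right-hand side, which the scaling vector field alone cannot control. The identity lets one trade $\|S\partial_r\,\cdot\,\|_{L^2}$ for $\|\partial_r S\,\cdot\,\|_{L^2}+\|\partial_r\,\cdot\,\|_{L^2}$, closing the estimate at order $S^{\le 1}$. The remainder is bookkeeping: cutoffs adapted to the enlargements, the approximations $r\approx R$ or $r\approx\tau$, and Jacobians from the rescaling to the unit box.
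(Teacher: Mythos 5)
Your overall strategy (rescale each dyadic region to a unit box, apply a two\textendash dimensional Sobolev embedding, and rewrite the resulting derivatives in terms of $S$ and $\partial_r$) is the same spirit as \cite{MTT}, which the paper simply cites for the $R>1$ and $U_c$ ranges and then supplements with a dedicated argument for $R=1$. However, the specific coordinates you choose leave a second--order remnant that the commutator identity cannot remove. In $(\log t, r/R)$ the coordinate vector fields are $t\partial_t$ and $R\partial_r$, so the cross term from the product Sobolev is $R\,\partial_r(t\partial_t g)$. Writing $t\partial_t=S-r\partial_r$ and using $[\partial_r,S]=\partial_r$ gives
\[ R\,\partial_r(t\partial_t g) = R\,\partial_r S g - R\,\partial_r g - Rr\,\partial_r^2 g, \]
and the summand $Rr\,\partial_r^2 g$ is not bounded by $\|S^{\le 1}W\|$ or $\|\partial_r S^{\le 1}W\|$: the commutator identity converts $\|S\partial_r\cdot\|$ into $\|\partial_r S\cdot\|+\|\partial_r\cdot\|$, but it does nothing for $\partial_r^2$. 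The same problem appears near the cone: in your null coordinates (whether linear rescalings or logarithms of $u_c,\uu_c$), the cross term is a multiple of $\partial_{u_c}\partial_{\uu_c}W\propto\Box_c W$, which is not of the form $\partial_r S^{\le 1}W$. The paper's choice --- and the one used in \cite{MTT} --- is $(s,\rho)=(\log t,\,r/t)$, whose coordinate vector fields are exactly $\partial_s=S$ and $\partial_\rho=t\partial_r$; these commute, so the cross term is $t\partial_r S g$ with no second derivative, and the computation closes.

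A second, independent gap is the substitution $g=r^{-1/2}W$ at the outset, which is not admissible on $C^{c,R=1}_\tau$. Since $W$ is odd and $C^2$, near $r=0$ one has $W(t,r)=\partial_r W(t,0)\,r+o(r^2)$, hence
\[ \partial_r g = -\tfrac12 r^{-3/2}W + r^{-1/2}\partial_r W = \tfrac12\partial_r W(t,0)\,r^{-1/2} + o(r^{1/2}), \]
so $\|\partial_r g\|_{L^2_r(0,2)}=+\infty$ whenever $\partial_r W(t,0)\neq 0$. The right--hand side of your product Sobolev estimate is therefore infinite on the $R=1$ region and the inequality is vacuous there. This is precisely the difficulty the paper flags (``a different change of variables is required\ldots some care is also taken to assist with the singular behavior at $r=0$'') and resolves differently: it applies the fundamental theorem of calculus in $(\log t,r/t)$ directly to $\beta(t/\tau)W$, which vanishes at $r=0$ by oddness, obtains the $r^{1/2}$ gain from the area of the integration rectangle $[\tau(1-\delta),t]\times[0,r]$ under Cauchy--Schwarz, and only then multiplies through by $r^{-1/2}$. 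The singular weight thus never enters an $L^2$ norm, and for $R=1$ one actually obtains the stronger bound with $\tau^{-3/2}$ in place of $\tau^{-1/2}R^{-1}$.
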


Here and throughout, $L^2_r$ is simply the $1$-dimensional norm:
\[\|f\|_{L^2_r}^2 = \int_0^\infty |f(r)|^2\,dr.\]

\begin{proof}
On $C^{c, U_c}_\tau$ regions and on $C^{c, R}_\tau$ regions for $R
>1$, the result follows from \cite{MTT} in one spatial dimension. On
the $C^{c, R=1}_\tau$ regions, however, a different change of
variables is required to avoid picking up vector fields other than
$S$.  Some care is also taken to assist with the singular behavior at
$r=0$ that was introduced in the reduction to one dimension.

Let $\beta:[0,\infty)\to [0,1]$ be a smooth cutoff function such that
$\beta(y)=1$ for $y\in [1,2]$ and $\beta(y)=0$ for $y\in
[0,1-\delta]\cup [2+\delta,\infty)$ where $0<\delta\ll 1$.  We examine $\beta(t/\tau) W(t,r)$.  We first change variables to
$t=e^s$ and $r=\rho e^s$.  Applying
the Fundamental Theorem of Calculus in $s$ and $\rho$ (relying up on the
fact that $S^{\le 1} W$ is odd and hence vanishes at $r=0$), we have 
\[
    |\beta(e^s/\tau)W(e^s, \rho e^s)|
    \lc
    \int_0^\rho \int_{-\infty}^{\log t} |\partial_\zeta
    \partial_s(\beta(e^s/\tau) W(e^s, \zeta e^s))|\,ds\,d\zeta.
 \]
 Since $\partial_s (W(e^s, \zeta e^s)) = (SW)(e^s,\zeta e^s)$, upon
 converting back to $(t,r)$-coordinates, we have
 \begin{multline}\label{r1}
  | \beta(t/\tau) W(t,r)| \lesssim
 \tau^{-2}  \int\int_{[\tau(1-\delta),t]\times [0,r]} |S^{\le 1}
   W(t,z)|\,dz\,dt
   \\+ \tau^{-1} \int\int_{[\tau(1-\delta),t]\times [0,r]}
   |(\partial_r S^{\le 1} W)(t,z)|\,dz\,dt.
 \end{multline}
 Applying the Schwarz inequality to the right side and multiplying
 through by $r^{-\frac{1}{2}}$ then yields
\[ \|r^{-\frac{1}{2}} W\|_{L^\infty_tL^\infty_r(C^{c,R=1}_\tau)}
 \lesssim \frac{1}{\tau^{\frac{3}{2}}} \|S^{\le 1}W\|_{L^2_tL^2_r(\tC^{c,R=1}_\tau)}
 + \frac{1}{\tau^{\frac{1}{2}}} \|\partial_r S^{\le 1}
 W\|_{L^2_tL^2_r(\tC^{c,R=1}_\tau)},\]
which is stronger than \eqref{WR} when $R=1$.
\end{proof}

By writing $\partial_r = \partial_{\uu_c} - \partial_{u_c}$ and using \eqref{scaling}, we can
obtain the following corollary.  See, e.g., \cite{M-Stewart}.
\begin{corollary}\label{cor_wklsob}
  Suppose $W\in C^2([4,T_\varepsilon]\times\R)$ is an odd function.
   If $c>0$, $\tau\ge 4$, $1\le R\le c \tau/2$, $2\le U_c\le c\tau/4$,
  then
  \begin{align}\label{wcrt}
   \|r^{-\frac{1}{2}}W\|_{L^\infty_tL^\infty_r(C^{c,R}_\tau)} &\lesssim
    \frac{1}{\tau^{\frac{1}{2}} R} \|S^{\le 2}
    W\|_{L^2_tL^2_r(\tC^{c,R}_\tau)} +
    \frac{1}{\tau^{\frac{1}{2}}} \|\duuc S^{\le 1}
    W\|_{L^2_t L^2_r(\tC^{c,R}_\tau)},\\
    \|W\|_{L^\infty_t L^\infty_r(C^{c,U_c}_\tau)}&\lesssim
                                                   \frac{1}{\tau^{\frac{1}{2}}U_c^{\frac{1}{2}}}
                                                   \|S^{\le 2}
                                                   W\|_{L^2_tL^2_r(\tC^{c,U_c}_\tau)}
                                                   +
                                                   \frac{\tau^{\frac{1}{2}}}{U_c^{\frac{1}{2}}}
                                                   \|\duuc S^{\le
                                                   1}
                                                   W\|_{L^2_tL^2_r(\tC^{c,U_c}_\tau)}. \label{wcut}
  \end{align}
  \end{corollary}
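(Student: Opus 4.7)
The plan is to deduce the corollary from Lemma \ref{lemma_wklsob} by replacing $\partial_r S^{\le 1} W$ with $\duuc S^{\le 1} W$ at the cost of one additional scaling vector field. The key algebraic ingredient is the identity $\partial_r = \duuc - \duc$, combined with \eqref{scaling}: from $S = u_c \duc + \uu_c \duuc$ one solves $\duc = u_c^{-1}(S - \uu_c \duuc)$, giving
$$\partial_r = \frac{u_c+\uu_c}{u_c}\duuc - \frac{1}{u_c}S.$$
Applied to $f = S^j W$ for $j \in \{0,1\}$, and using $[\duuc, S] = \duuc$ to move $S$ past $\duuc$ when $j=1$, this yields the pointwise bound
$$|\partial_r S^{\le 1} W| \lesssim \Bigl|\tfrac{u_c+\uu_c}{u_c}\Bigr|\,|\duuc S^{\le 1} W| + \frac{1}{|u_c|}\,|S^{\le 2} W|$$
on any region where $u_c \neq 0$.

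Next I would evaluate these coefficients on each piece of the decomposition \eqref{decomp_c}. On $C^{c,R}_\tau$ with $1 \le R \le c\tau/4$, $u_c \approx \uu_c \approx \tau$, so the two coefficients are bounded by a constant and by $\tau^{-1}$ respectively; plugging into \eqref{WR} and absorbing the resulting $\tau^{-3/2}\|S^{\le 2}W\|_{L^2}$ into $\tau^{-1/2} R^{-1}\|S^{\le 2}W\|_{L^2}$ (using $R \le c\tau/4$) produces \eqref{wcrt}. On $C^{c,U_c}_\tau$ with $2 \le U_c \le c\tau/4$, $u_c \approx U_c$ and $\uu_c \approx \tau$, so the coefficients are $\lesssim \tau/U_c$ and $\lesssim U_c^{-1}$; substituting into \eqref{WU} gives exactly the factors $\tau^{1/2}/U_c^{1/2}$ and $\tau^{-1/2} U_c^{-1/2}$ that appear in \eqref{wcut}. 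The remaining region $C^{c,c\tau/2}_\tau$ is covered by either case since there $R \approx U_c \approx \tau$.

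The only point requiring care is the singular factor $u_c^{-1}$ introduced when expressing $\duc$ through $S$. This is harmless precisely because the hypotheses $R \ge 1$ and $U_c \ge 2$ keep the relevant regions bounded away from the light cone $\{u_c = 0\}$, so $|u_c| \gtrsim 1$ throughout. Beyond this verification and the coefficient arithmetic described above, no new analytic input past Lemma \ref{lemma_wklsob} is needed.
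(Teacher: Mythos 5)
Your proof is correct and follows the same route the paper sketches: write $\partial_r = \partial_{\uu_c} - \partial_{u_c}$, eliminate $\partial_{u_c}$ via $S = u_c\partial_{u_c} + \uu_c\partial_{\uu_c}$, and then evaluate the resulting coefficients $\frac{u_c+\uu_c}{u_c}\approx\frac{\tau}{\la u_c\ra}$ and $\frac{1}{|u_c|}\approx\frac{1}{\la u_c\ra}$ on each region using the stated size equivalences, which is precisely what the phrase ``by writing $\partial_r = \partial_{\uu_c}-\partial_{u_c}$ and using \eqref{scaling}'' in the paper amounts to. The commutator remark for $j=1$ is unnecessary since $\partial_{\uu_c}SW$ is already of the form $\partial_{\uu_c}S^{\le 1}W$, but that is harmless.
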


Using \eqref{scaling}, we also obtain the following bounds on $\partial V:=(\partial_t, \partial_r)V$,
which appeared previously in \cite{M-Stewart}.  They are space-time
variants of estimates originally from \cite{KlSid} and \cite{Sideris-Thomases}.

\begin{lemma} \label{vbounds} Suppose $V\in C^3([4,T_\varepsilon]\times \R)$ is an
  odd function.
If $\tau\ge 4$, $1\le R\le \tau/2$, and $2\le U\le \tau/4$,
  then
  \begin{align}\label{vcrt}
   \|V\|_{L^\infty_tL^\infty_r(C^{1,R}_\tau)} &\lesssim
    \frac{1}{\tau^{\frac{1}{2}} R^{\frac{1}{2}}} \|S^{\le 2}
    V\|_{L^2_tL^2_r(\tC^{1,R}_\tau)} +
    \frac{R^{\frac{1}{4}}}{\tau^{\frac{1}{2}}} \|r^{\frac{1}{4}}\duu S^{\le 1}
    V\|_{L^2_t L^2_r(\tC^{1,R}_\tau)},\\
    \|V\|_{L^\infty_t L^\infty_r(C^{1,U}_\tau)}&\lesssim
                                                   \frac{1}{\tau^{\frac{1}{2}}U^{\frac{1}{2}}}
                                                   \|S^{\le 2}
                                                   V\|_{L^2_tL^2_r(\tC^{1,U}_\tau)}
                                                   +
                                                   \frac{\tau^{\frac{1}{2}}}{U^{\frac{1}{2}}}
                                                   \|\duu S^{\le
                                                   1}
                                                 V\|_{L^2_tL^2_r(\tC^{1,U}_\tau)} \label{vcut},\\
    \|\partial V\|_{L^\infty_t L_r^\infty(C^{1,R}_\tau)} &\lesssim
                                                       \frac{1}{\tau^{\frac{1}{2}}R^{\frac{1}{2}}}
                                                       \|\partial
                                                       S^{\le 2}
                                                           V\|_{L^2_tL^2_r(\tC^{1,R}_\tau)}
                                                           +
                                                           \frac{R^{\frac{1}{4}}}{\tau^{\frac{1}{2}}}
                                                           \|r^{\frac{1}{4}}\Box
                                                           S^{\le 1}
                                                           V\|_{L^2_tL^2_r(\tC^{1,R}_\tau)},\label{kscrt}\\
    \|\partial V\|_{L^\infty_tL^\infty_r(C^{1,U}_\tau)} &\lesssim
                                                            \frac{1}{
                                                            \tau^{\frac{1}{2}}U^{\frac{1}{2}}}
                                                            \|\partial
                                                            S^{\le 2}
                                                            V\|_{L^2_tL^2_r(\tC^{1,U}_\tau)}
                                                            +
                                                            \frac{\tau^{\frac{1}{2}}}{U^{\frac{1}{2}}
                                                            }
                                                            \|\Box
                                                            S^{\le 1}
                                                          V\|_{L^2_tL^2_r(\tC^{1,U}_\tau)}.\label{kscut}
  \end{align}
\end{lemma}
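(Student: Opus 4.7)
The plan is to deduce all four estimates from Corollary~\ref{cor_wklsob}, first for $V$ and then for $\partial V$, exploiting that $r\approx R$ on $\tC^{1,R}_\tau$ and $r\approx\tau$ on $\tC^{1,U}_\tau$ so that dyadic powers of $r$ may be traded for powers of $R$ or $\tau$. The bound \eqref{vcut} is just \eqref{wcut} with $c=1$, since no $r^{-1/2}$ weight appears on the left to absorb. I would obtain \eqref{vcrt} by multiplying \eqref{wcrt} through by $R^{1/2}\approx r^{1/2}$ and distributing the resulting $R^{1/2}$ in the second right-hand term as $R^{1/4}\cdot r^{1/4}$, placing the $r^{1/4}$ inside the $L^2$-norm (permissible since $r\approx R$ on $\tC^{1,R}_\tau$).

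For \eqref{kscrt} and \eqref{kscut}, I would apply the just-established estimates \eqref{vcrt}, \eqref{vcut} to the odd function $\partial_tV$. Commuting $[\partial_t,S]=-\partial_t$ converts $\|S^{\le 2}\partial_tV\|_{L^2}$ into the desired $\|\partial S^{\le 2}V\|_{L^2}$. For the good-derivative term, the decomposition
\[
\duu\partial_tV=\du\duu V+\duu^2V=\tfrac14\Box V+\duu^2V
\]
(using $\Box=4\du\duu$) directly yields the target $\Box V$ contribution, and I would convert the residual $\duu^2V$ through the one-dimensional Klainerman--Sideris identity
\[
\uu\,\duu^{2}V = S\duu V-\tfrac{u}{4}\Box V,
\]
which follows from $SV=u\du V+\uu\duu V$ together with $[\duu,S]=\duu$. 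Since $\uu\approx\tau$ on both region types, dividing by $\uu$ supplies a gain $\tau^{-1}$ that absorbs the $S\duu V$ piece into the first right-hand term, while the $u/\uu\lesssim 1$ multiple of $\Box V$ is bounded by the second.

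To upgrade to all of $\partial V$, I would use the scaling identity $r\,\partial_rV=SV-t\,\partial_tV$. Both $SV$ and $\partial_tV$ are odd and vanish at $r=0$ with the same order as $V$, so the apparent $1/r$ singularity cancels and $\partial_rV$ is controlled by \eqref{vcrt}/\eqref{vcut} applied to the odd quantity $SV$, combined with the already-established bound on $\partial_tV$. The main obstacle is precisely at the $R=1$ scale, where $\partial_rV$ is \emph{even} under the odd extension of $V$ and therefore fails to satisfy the hypotheses of Corollary~\ref{cor_wklsob} directly; the identity above is the mechanism for bypassing this obstruction, though some care is required to show that the $\|S^{\le 3}V\|$ produced by applying \eqref{vcrt} to $SV$ can be absorbed into the target $\|\partial S^{\le 2}V\|$ via the vanishing of $SV$ at $r=0$ together with a Hardy-type estimate on the dyadic enlargements.
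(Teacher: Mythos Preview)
Your plan for \eqref{vcut}, and for \eqref{vcrt} when $R>1$, is essentially the paper's, and your Klainerman--Sideris manipulation of $\duu\partial_tV$ matches what ``a subsequent application of \eqref{scaling}, as in \cite{M-Stewart}'' means. Two steps, however, do not go through as written.

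First, the passage from \eqref{wcrt} to \eqref{vcrt} fails at $R=1$. On $C^{1,R=1}_\tau$ the radius ranges over $[0,2]$, so $r\not\approx R$; multiplying \eqref{wcrt} through by $R^{1/2}=1$ gives only $\|\duu S^{\le 1}V\|_{L^2}$ on the right, whereas \eqref{vcrt} requires the strictly smaller quantity $\|r^{1/4}\duu S^{\le 1}V\|_{L^2}$. The paper obtains this weight by returning to the integral identity \eqref{r1} and applying Cauchy--Schwarz in the $z$-integral with the pair $z^{-1/4}$, $z^{1/4}|\partial_r S^{\le 1}V|$, which produces the factor $r^{1/4}$ outside and the weight $r^{1/4}$ inside directly --- this is the ``different application of the Schwarz inequality'' in the proof. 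Without the $r^{1/4}$ in \eqref{vcrt}, your subsequent derivation of \eqref{kscrt} at $R=1$ also falls short, since the target right-hand side carries $\|r^{1/4}\Box S^{\le 1}V\|$.

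Second, your route to $\|\partial_rV\|_{L^\infty}$ via $\partial_rV=r^{-1}(SV-t\,\partial_tV)$ does not close. Once you estimate the two pieces separately you forfeit the cancellation you invoked: the term $(t/r)\,\partial_tV$ carries a factor $\tau/R\gg 1$ on $C^{1,R}_\tau$ (unbounded as $r\to 0$ when $R=1$), so the already-established bound on $\partial_tV$ cannot be leveraged; and applying \eqref{vcrt} to $SV$ yields $\|S^{\le 3}V\|$, convertible to $\|\partial S^{\le 2}V\|$ only at the cost of a factor $\tau$ that the prefactor $R^{-1}$ does not compensate. A Hardy inequality on the dyadic enlargement does not help for $R>1$, since that annulus does not touch $r=0$. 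The paper does not go this way: for $R>1$ it invokes the cutoff-based Sobolev argument of \cite{MTT}, which needs no oddness and hence applies to $\partial_rV$ directly, while the $R=1$ case is handled through \eqref{scaling} as in \cite{M-Stewart}.
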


\begin{proof}
 Outside of the $R=1$ case, the above follow from \cite{MTT} and
 \cite{M-Stewart}.  When $R=1$, we argue as in \eqref{r1}.  At this
 point, a different application of the Schwarz inequality yields
 \[
   \beta(t/\tau) V(t,r)\lesssim
   \frac{1}{\tau^{\frac{3}{2}}} \|S^{\le 1}
   V\|_{L^2_tL^2_r(\tC^{1,R=1}_\tau)} +
   \frac{r^{\frac{1}{4}}}{\tau^{\frac{1}{2}}} \|r^{\frac{1}{4}}
   \partial_r S^{\le 1} V\|_{L^2_tL^2_r(\tC^{1,R=1}_\tau)}.\]
 Using \eqref{scaling} then allows us to recover \eqref{vcrt}.  A
 subsequent application of \eqref{scaling}, as in \cite{M-Stewart}, yields \eqref{kscrt}.
\end{proof}

It will be of utmost importance to track the availability to sum over
the dyadic ranges $R, \tau, U, U_c$.  To this end, we shall use
notation such as
\[\|W\|^2_{\ell^2_\tau \ell^2_{U\le \tau/4} L^2_tL^2_r(C^{1,U}_\tau)}
  =  \sum_{\tau} \sum_{U\le \tau/4} \|W\|^2_{L^2_t
    L^2_r(C^{1,U}_\tau)},\quad
\|W\|^2_{\ell^\infty_U \ell^2_{\tau} L^2_tL^2_r(C^{1,U}_\tau)}
  =  \sup_{U\ge 1} \sum_{\tau} \|W\|^2_{L^2_t
    L^2_r(C^{1,U}_\tau)}.
\]
Other variants, such as over $C^{1,R}_\tau$, $C^{c, U_c}_\tau$, etc.,
are similarly defined.

\medskip
\section{Local Energy Estimates}
In this section we shall gather the energy estimates that will be used
in the sequel.  These are variants of the integrated local energy
estimates, which originated in \cite{Morawetz}.  See also
\cite{KSS}, \cite{Sterbenz}, \cite{MS}, \cite{MST} for subsequent
generalizations and applications to nonlinear equations.  These ideas
are combined with an integrated variant of the classical estimate
obtained by using the scaling vector field as a multiplier as was done
in
\cite{Morawetz_scaling}.  We also utilize a ghost weight as introduced
in \cite{Alinhac_ghostweight}.  As in \cite{bechtold_m}, we use ghost
weights where the speed does not necessarily coincide with the given
equation.  For the speed $c$ component, our principal estimate is an
$r^p$-weighted (with $p=1$) integrated local energy estimate from
\cite{dafermos_rodnianski}.  It is combined with a ghost weight as was
done in \cite{M-Rhoads}.  We pair this with a Hardy inequality that
takes advantage of the multiple speeds.

For weighting functions, we shall use, for a parameter $\theta\ge 1$,
\begin{equation}
  \label{weights}
\sigma_\theta(y) = \frac{y}{|y|+\theta}, \quad \sigma_\theta'(y)=\frac{\theta}{(|y|+\theta)^2},
\end{equation}
which is bounded and $C^1$.  Moreover, we note that
   \begin{equation}
     \label{weights_properties}
\sigma'_\theta(y)\gtrsim \theta^{-1} \quad\text{ on $\{\la y\ra \approx \theta\}$}.
 \end{equation}
In order to help control the singularity at $r=0$ that results from
the reduction to one-dimension, we also note
\begin{equation}
  \label{weights_properties2}
  \frac{d}{dr} (\sigma_R(r))^{\frac{1}{2}} \gtrsim r^{-\frac{1}{2}}
  R^{-\frac{1}{2}} \quad\text{ on $\{\la r\ra\approx R\}$.}
\end{equation}
Using $(\sigma_R(r))^{\delta}$ with $0<\delta<1$ as a weight in order
to gain added control at $r=0$ has appeared previously
in, e.g., \cite{Hidano-Wang-Yokoyama}.

We shall use the following proposition when $p=0$ and $p=1$.  In the former
case, this corresponds to the classical integrated local energy
estimates and variants that are available using the ghost weight.  For
$p=1$, these are instead variants of \cite{Morawetz_scaling} where
$S=t\partial_t+r\partial_r$ is used as a multiplier.  The fact that no
upper bound on $p$ is necessary is a consequence of the radiality
assumption.  The angular terms necessitate $p\le 2$ in more general
cases.  We start with a lemma, which indicates that the ``good''
derivative portion of the calculation works independently.
 
\begin{lemma}\label{lemma_good_v}
  Suppose that $p\geq0$, $V\in C^2([4,T]\times\R)$, and that there exists
  $\tilde{R}>0$ so that $V(t,x)\equiv 0$ whenever $|x|>\tilde{R}$.
Then
  \begin{multline}
    \label{good_v}
    \|r^{-\frac{1}{4}}\la r\ra^{-\frac{1}{4}}\la \uu\ra^{\frac{p}{2}}\duu
    V\|^2_{\ell^\infty_R \ell^2_\tau L^2_tL^2_r(C^{1,R}_\tau)} + \|\la
    u\ra^{-\frac{1}{2}} \la \uu\ra^{\frac{p}{2}}\duu V\|^2_{\ell^\infty_U
      \ell^2_\tau L^2_tL^2_r(C^{1,U}_\tau)}
    + \|\la
    u_c\ra^{-\frac{1}{2}} \la \uu\ra^{\frac{p}{2}}\duu V\|^2_{\ell^\infty_{U_c}
     \ell^2_\tau L^2_tL^2_r(C^{c,U_c}_\tau)}
 \\\lesssim \|\la r\ra^{\frac{p}{2}} \duu V(4,\cd)\|^2_{L^2_r} +
   \int_4^T\int_0^\infty \la\uu\ra^p |\Box V| |\duu V| \,dr\,dt.
  \end{multline}
\end{lemma}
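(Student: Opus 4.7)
The approach is to derive three parallel multiplier identities, one per dyadic family on the left of \eqref{good_v}. Since $\Box V = 4\du\duu V$ in $(1+1)$ dimensions and $\du\uu=0$, for any scalar weight $w=w(t,r)$ one has the pointwise identity
\[w\la\uu\ra^p\,\Box V\cdot\duu V \;=\; 2\du\!\bigl[w\la\uu\ra^p(\duu V)^2\bigr] \;-\; 2(\du w)\la\uu\ra^p(\duu V)^2.\]
Integrating over $[4,T]\times[0,\infty)$ and converting the divergence term into boundary pieces at $t=4$, $t=T$, and $r=0$ (the $r=\infty$ piece vanishes by compact support of $V$), one obtains an identity relating a coercive quantity of the form $\int\!\!\int(\pm\du w)\la\uu\ra^p(\duu V)^2\,dr\,dt$ to the forcing $\int\!\!\int w\la\uu\ra^p(\Box V)(\duu V)\,dr\,dt$, the initial term at $t=4$, and the $t=T$ and $r=0$ boundary terms. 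In each specialization below the weight $w$ is chosen to be uniformly bounded in the dyadic parameter, so that $|w|\lesssim 1$ lets us control the forcing by $\int\!\!\int\la\uu\ra^p|\Box V||\duu V|\,dr\,dt$ and the initial term by $\|\la r\ra^{p/2}\duu V(4,\cd)\|_{L^2_r}^2$ (using $\la 4+r\ra\approx\la r\ra$).

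To extract the $C^{1,U}_\tau$ contribution, I would take $w=\sigma_U(u)-1\in[-2,0]$; since $\du u=1$, one has $\du w=\sigma_U'(u)$, and by \eqref{weights_properties} this is $\gtrsim U^{-1}$ on $\{\la u\ra\approx U\}$. Because $w\le 0$, the boundary pieces of $F=w\la\uu\ra^p(\duu V)^2$ at $t=T$ and at $r=0$ (where $u=t>0$ makes $\sigma_U(t)-1\in[-1,0]$) come with favorable sign and can be moved to the left. The $C^{c,U_c}_\tau$ contribution is extracted identically with $w=\sigma_{U_c}(u_c)-1$; the only change is $\du u_c=(c+1)/2$, which contributes a harmless positive constant. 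This is precisely the mixed-speed ghost weight mechanism: it works because $\la\uu\ra^p$, although a speed-$1$ quantity, still satisfies $\du\la\uu\ra^p=0$, so the speed-$c$ weight localization propagates through the speed-$1$ identity without obstruction. For the $C^{1,R}_\tau$ contribution I would take $w=\sigma_R(r)^{1/2}\in[0,1]$; by \eqref{weights_properties2} one has $-\du w=\tfrac14\sigma_R(r)^{-1/2}\sigma_R'(r)\gtrsim r^{-1/2}R^{-1/2}$ on $\{\la r\ra\approx R\}$, which after multiplication by $R^{-1/2}\approx\la r\ra^{-1/2}$ inside the dyadic region gives the required $r^{-1/2}\la r\ra^{-1/2}$ weight. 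Here the favorable sign comes from $\sigma_R(0)=0$, which kills the $r=0$ boundary term outright, while $F\ge 0$ makes the $t=T$ boundary term nonnegative and hence discardable.

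In each case the right-hand side of the resulting inequality is independent of the dyadic parameter, so taking the supremum over $R$, $U$, or $U_c$ converts the localized estimate into the $\ell^\infty$ norm on the left of \eqref{good_v}; the $\ell^2_\tau$ structure is automatic because the regions $C_\tau$ have finite overlap in $\tau$. Summing the three inequalities then yields \eqref{good_v}. The main subtlety is the $r=0$ boundary contribution produced by the $(1+1)$-dimensional reduction, which in all three cases is either killed by the weight (as with $\sigma_R^{1/2}$) or arranged to have the correct sign so as to be absorbed (as with $\sigma_U-1$ and $\sigma_{U_c}-1$); the explicit forms in \eqref{weights} were designed precisely for this purpose, and no upper bound on $p$ arises because in one spatial dimension there are no angular terms to control.
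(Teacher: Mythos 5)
Your proposal is correct and rests on the same underlying multiplier mechanism as the paper (the weights $\sigma_\theta$ of \eqref{weights}, the coercivity \eqref{weights_properties}--\eqref{weights_properties2}, the observation that $\partial_u$ kills $\la\uu\ra^p$, and exploitation of the sign of the $r=0$ and $t=T$ boundary contributions), but it is organized differently. You run three separate multiplier computations, one per dyadic family, with weights $\sigma_R(r)^{1/2}$, $\sigma_U(u)-1$, and $\sigma_{U_c}(u_c)-1$. The paper instead uses a single multiplier whose weight is the product $e^{(\sigma_R(r))^{1/2}}e^{-\sigma_U(t-r)}e^{-\sigma_{U_c}(ct-r)}$; one integration by parts then produces all three coercive bulk terms at once (the fourth, fifth, and sixth terms of \eqref{good_v_ibp}), and because the product weight is strictly positive, both the $t=T$ and $r=0$ boundary terms are manifestly nonnegative and can be discarded without further thought. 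Your version buys slightly simpler bookkeeping in each of the three calculations, at the cost of a small sign argument for the $U$ and $U_c$ cases, where $F=w\la\uu\ra^p(\duu V)^2\le 0$ renders the $r=0$ and $t=T$ boundary terms nonpositive on the right side after rearranging -- which you correctly identify. The paper's single weight is a bit more economical and extends more transparently to Proposition \ref{prop_leghost_v}, where the ``good'' and ``bad'' multiplier identities must be summed to cancel the $r=0$ boundary contribution and having a uniform positive weight makes that cancellation cleaner. Both arguments are valid and yield \eqref{good_v}.
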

\begin{proof}
  We consider
  \begin{multline}
    \label{good_v_multiplier}
    \int_4^T\int_0^\infty e^{(\sigma_R(r))^{\frac{1}{2}}} 
    e^{-\sigma_U(t-r)} e^{-\sigma_{U_c}(ct-r)}(1+t+r)^p
   \Box V(t,r)\,
    (\partial_t+\partial_r)V(t,r)\,dr\,dt
    \\=\frac{1}{2}\int_4^T\int_0^\infty
    e^{(\sigma_R(r))^{\frac{1}{2}}} e^{-\sigma_U(t-r)} e^{-\sigma_{U_c}(ct-r)}
    (1+t+r)^p
    (\partial_t-\partial_r)\Bigl((\partial_t+\partial_r)V\Bigr)^2\,dr\,dt. 
  \end{multline}
  Integrating by parts gives that this is equivalent to
  \begin{multline}\label{good_v_ibp}
  \frac{1}{2}  \int_0^\infty e^{(\sigma_R(r))^{\frac{1}{2}}}
   e^{-\sigma_U(T-r)}e^{-\sigma_{U_c}(cT-r)} (1+T+r)^p
    \Bigl((\partial_t+\partial_r)V(T,r)\Bigr)^2 \,dr 
    \\- \frac{1}{2} \int_0^\infty e^{(\sigma_R(r))^{\frac{1}{2}}}
   e^{-\sigma_U(4-r)}e^{-\sigma_{U_c}(4c-r)} (5+r)^p
    \Bigl((\partial_t+\partial_r)V(4,r)\Bigr)^2 \,dr
\\+ \frac{1}{2}\int_4^T e^{-\sigma_U(t)} e^{-\sigma_{U_c}(ct)} (1+t)^p
(\partial_r V(t,0))^2\,dt
\\+\frac{1}{2}\int_4^T\int_0^\infty  \frac{d}{dr}(\sigma_R(r))^{\frac{1}{2}}
  e^{(\sigma_R(r))^{\frac{1}{2}}} e^{-\sigma_U(t-r)} e^{-\sigma_{U_c}(ct-r)} (1+t+r)^p
  \Bigl((\partial_t+\partial_r)V\Bigr)^2\,dr\,dt
  \\+\int_4^T\int_0^\infty  e^{(\sigma_R(r))^{\frac{1}{2}}} 
  \sigma'_U(t-r)e^{-\sigma_U(t-r)}e^{-\sigma_{U_c}(ct-r)} (1+t+r)^p
  \Bigl((\partial_t+\partial_r)V\Bigr)^2\,dr\,dt
  \\+\frac{c+1}{2}\int_4^T\int_0^\infty  e^{(\sigma_R(r))^{\frac{1}{2}}} 
  e^{-\sigma_U(t-r)}\sigma'_{U_c}(ct-r)e^{-\sigma_{U_c}(ct-r)} (1+t+r)^p
  \Bigl((\partial_t+\partial_r)V\Bigr)^2\,dr\,dt.
\end{multline}
We drop the non-negative first and third terms.  We subsequently can
restrict the range of the fourth, fifth, and sixth terms so that
\eqref{weights_properties} can be applied.  We also use that
$\sigma_\theta$ is bounded uniformly in $\theta$.  For example,
\begin{multline*}
  \int_4^T\int_0^\infty  e^{(\sigma_R(r))^{\frac{1}{2}}} 
  \sigma'_U(t-r)e^{-\sigma_U(t-r)} e^{-\sigma_{U_c}(ct-r)} (1+t+r)^p
  \Bigl((\partial_t+\partial_r)V\Bigr)^2\,dr\,dt
\\  \gtrsim U^{-1} \int\int_{\{\la t-r\ra\approx U\}} (1+t+r)^p \Bigl((\partial_t+\partial_r)V\Bigr)^2\,dr\,dt.
\end{multline*}
Taking appropriate supremums in $R, U$ using the boundedness of
$\sigma_\theta$ then yields \eqref{good_v}.
\end{proof}

While the previous lemma worked independently, to get similar control
on the remaining derivatives, we must examine both the ``good''
derivative $\partial_t+\partial_r$ and the ``bad'' derivative
$\partial_t-\partial_r$ in unison.
In the radial case, this results from the $r=0$ boundary behavior.  In
more general situations, there is interaction amongst the angular
behavior as well.

\begin{proposition}\label{prop_leghost_v}
  Suppose that $p\geq0$, $V\in C^2([4,T]\times \R)$, and that there exists
  $\tilde{R}>0$ so that $V(t,x)\equiv 0$ whenever $|x|>\tilde{R}$.
Then 
  \begin{multline}
    \label{leghost_v}
    \|r^{-\frac{1}{4}}\la r\ra^{-\frac{1}{4}}\la \uu\ra^{\frac{p}{2}} \duu
 V\|^2_{\ell^\infty_R \ell^2_\tau L^2_tL^2_r(C^{1,R}_\tau)}
+ \|r^{-\frac{1}{4}}\la r\ra^{-\frac{1}{4}} \la u\ra^{\frac{p}{2}}\du
V\|^2_{\ell^\infty_R \ell^2_\tau L^2_tL^2_r(C^{1,R}_\tau)}
   \\ + \|\la u_c\ra^{-\frac{1}{2}} \la\uu\ra^{\frac{p}{2}}\duu
    V\|^2_{\ell^\infty_{U_c}\ell^2_\tau L^2_tL^2_r(C_\tau^{c,U_c})}
 + \|\la u_c \ra^{-\frac{1}{2}}\la u\ra^{\frac{p}{2}}\du
 V\|^2_{\ell^\infty_{U_c} \ell^2_\tau L^2_tL^2_r(C^{c,U_c}_\tau)}
\\ + \|\la
   u\ra^{-\frac{1}{2}} \la\uu\ra^{\frac{p}{2}}\duu V\|^2_{\ell^\infty_U
     \ell^2_\tau L^2_tL^2_r(C^{1,U}_\tau)} 
  \lesssim \|\la r\ra^{\frac{p}{2}}\partial V(4,\cd)\|^2_{L^2_r} 
  \\ +\int_4^T\int_0^\infty \la \uu\ra^p |\tilde{\Box} V| |\duu V| \,dr\,dt
   +\int_4^T\int_0^\infty \la u\ra^p |\tilde{\Box} V| |\du V| \,dr\,dt.
  \end{multline}
\end{proposition}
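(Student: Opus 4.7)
The plan is to augment Lemma~\ref{lemma_good_v} with an analogous multiplier identity using $(\partial_t-\partial_r)V$ in place of $(\partial_t+\partial_r)V$, then to exploit the oddness of $V$ to cancel a problematic boundary term at $r=0$. Alongside the weight $\phi_1 = e^{(\sigma_R(r))^{1/2}}e^{-\sigma_U(t-r)}e^{-\sigma_{U_c}(ct-r)}\la\uu\ra^p$ used in Lemma~\ref{lemma_good_v}, I would introduce $\phi_2 = e^{-(\sigma_R(r))^{1/2}}e^{-\sigma_U(t-r)}e^{-\sigma_{U_c}(ct-r)}\la u\ra^p$. Note the deliberate sign flip on $(\sigma_R)^{1/2}$ in the exponent and the replacement of $\la\uu\ra^p$ by $\la u\ra^p$: the first is needed to keep the bulk positive for the bad direction, while the second matches the weight appearing on the right-hand side of \eqref{leghost_v}.

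For the bad-direction computation, the pointwise identity $\Box V\cdot(\partial_t-\partial_r)V = \tfrac{1}{2}(\partial_t+\partial_r)[(\partial_t-\partial_r)V]^2$ followed by integration by parts on $[4,T]\times[0,\infty)$ produces a nonnegative time-$T$ boundary term (dropped), an initial-data contribution at $t=4$ controlled by $\|\la r\ra^{p/2}\partial V(4,\cd)\|_{L^2_r}^2$, a spatial boundary term $-\tfrac{1}{2}\int_4^T \phi_2(t,0)[(\partial_t-\partial_r)V(t,0)]^2\,dt$ at $r=0$, and the bulk contribution $-\tfrac{1}{2}\int\!\!\int (\partial_t+\partial_r)\phi_2\cdot[(\partial_t-\partial_r)V]^2\,dr\,dt$. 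Using $(\partial_t+\partial_r)(t-r)=0$, $(\partial_t+\partial_r)(ct-r) = c-1$, and $(\partial_t+\partial_r)\la u\ra^p = 0$, a short calculation gives $-\tfrac{1}{2}(\partial_t+\partial_r)\phi_2 = \tfrac{1}{2}\bigl[(\sigma_R(r))^{1/2\prime} + (c-1)\sigma_{U_c}'(ct-r)\bigr]\phi_2\ge 0$. Restricting to the dyadic neighborhoods $\{\la r\ra\approx R\}$ and $\{\la u_c\ra\approx U_c\}$ and appealing to \eqref{weights_properties} and \eqref{weights_properties2} then delivers the two $\du V$ contributions on $C^{1,R}_\tau$ and $C^{c,U_c}_\tau$ appearing in \eqref{leghost_v}.

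The main obstacle is the sign of the $r=0$ boundary term, which points the wrong way. To handle it I would retain, rather than drop, the nonnegative $r=0$ boundary contribution $+\tfrac{1}{2}\int_4^T\phi_1(t,0)[(\partial_t+\partial_r)V(t,0)]^2\,dt$ produced by Lemma~\ref{lemma_good_v}, and then add the two multiplier identities. Oddness of $V$ forces $V(t,0) = \partial_t V(t,0) = 0$, so $[(\partial_t+\partial_r)V(t,0)]^2 = [(\partial_t-\partial_r)V(t,0)]^2 = [\partial_r V(t,0)]^2$; and the weights were chosen so that $\phi_1(t,0) = \phi_2(t,0) = e^{-\sigma_U(t)}e^{-\sigma_{U_c}(ct)}(1+t)^p$, since $e^{\pm(\sigma_R(0))^{1/2}} = 1$ and $\la u\ra = \la\uu\ra = 1+t$ at $r=0$. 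The two boundary pieces therefore cancel exactly --- precisely where the radial/odd structure enters. Finally, the forcing integrals $\int\!\!\int \phi_j \Box V (\partial_t\pm\partial_r)V\,dr\,dt$ are bounded by the two forcing terms on the right of \eqref{leghost_v} using that the exponential factors in $\phi_1,\phi_2$ are uniformly $O(1)$; combining these with the $\duu V$ bounds inherited directly from Lemma~\ref{lemma_good_v} and carrying out the same $\ell^\infty_{R,U,U_c}\ell^2_\tau$ dyadic bookkeeping as there yields \eqref{leghost_v}.
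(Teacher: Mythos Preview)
Your proposal is correct and follows essentially the same approach as the paper: a bad-direction multiplier identity with the sign on $(\sigma_R)^{1/2}$ flipped and the $\la\uu\ra^p$ weight replaced by $\la u\ra^p$, then summed with the good-direction identity of Lemma~\ref{lemma_good_v} so that the $r=0$ boundary terms offset. The only cosmetic difference is that you retain the $e^{-\sigma_U(t-r)}$ factor in $\phi_2$ (harmless in the bulk since $(\partial_t+\partial_r)(t-r)=0$) to force $\phi_1(t,0)=\phi_2(t,0)$ and hence exact cancellation, whereas the paper omits it and relies on $e^{-\sigma_U(t)}\approx 1$ to dominate; note that both arguments implicitly use $\partial_t V(t,0)=0$, which is not listed among the hypotheses of the proposition but holds in every application.
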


\begin{proof}
  We argue much like in the preceding proof but instead examine
  \begin{multline}
    \label{bad_v_multiplier}
\int_4^T\int_0^\infty e^{-(\sigma_R(r))^{\frac{1}{2}}} e^{-\sigma_{U_c}(ct-r)} (1+|t-r|)^p
    \Box V(t,r)\,
    (\partial_t-\partial_r)V(t,r)\,dr\,dt
    \\=\frac{1}{2}\int_4^T\int_0^\infty
    e^{-(\sigma_R(r))^{\frac{1}{2}}} e^{-\sigma_{U_c}(ct-r)} (1+|t-r|)^p (\partial_t+\partial_r)\Bigl((\partial_t-\partial_r)V\Bigr)^2\,dr\,dt.
  \end{multline}
  Integrating by parts shows that this is equal to
  \begin{multline}
    \label{bad_v_ibp}
\frac{1}{2}\int_0^\infty e^{-(\sigma_R(r))^{\frac{1}{2}}} e^{-\sigma_{U_c}(cT-r)} (1+|T-r|)^p
\Bigl((\partial_t-\partial_r)V(T,r)\Bigr)^2\,dr
\\-\frac{1}{2}\int_0^\infty e^{-(\sigma_R(r))^{\frac{1}{2}}}e^{-\sigma_{U_c}(4c-r)} (1+|4-r|)^p
\Bigl((\partial_t-\partial_r)V(4,r)\Bigr)^2\,dr
-\frac{1}{2}\int_4^T e^{-\sigma_{U_c}(ct)}
(1+t)^p(\partial_r V(t,0))^2\,dt
\\+\frac{1}{2}\int_4^T \int_0^\infty \frac{d}{dr}(\sigma_R(r))^{\frac{1}{2}} e^{-(\sigma_R(r))^{\frac{1}{2}}}e^{-\sigma_{U_c}(ct-r)}
(1+|t-r|)^p\Bigl((\partial_t-\partial_r)V\Bigr)^2\,dr\,dt
\\+\frac{c-1}{2}\int_4^T \int_0^\infty e^{-(\sigma_R(r))^{\frac{1}{2}}}\sigma'_{U_c}(ct-r)e^{-\sigma_{U_c}(ct-r)}
(1+|t-r|)^p\Bigl((\partial_t-\partial_r)V\Bigr)^2\,dr\,dt.
  \end{multline}
If it were not for the $r=0$ boundary term, we could argue as above to obtain our
estimate.  Here we instead must sum \eqref{good_v_ibp} and
\eqref{bad_v_ibp}.  The former provides the desired control in the
$r=0$ boundary term.  Then using \eqref{weights} and
\eqref{weights_properties} as in the preceding lemma
concludes the proof.
\end{proof}

We next proceed with the speed $c$ estimates.  This first estimate
combines the $r^p$ weighting of \cite{dafermos_rodnianski} with the
ghost weighting of \cite{Alinhac_ghostweight} as was done in
\cite{M-Rhoads}.  Here, however, we use a ghost weight at speed $1$
despite working with a solution to a speed $c$ wave equation.

\begin{proposition}\label{lemma_rp_w}
  Suppose that $c>0$, $W\in C^2([4,T]\times \R)$, and that there exists
  $\tilde{R}>0$ so that $W(t,x)\equiv 0$ whenever $|x|>\tilde{R}$.  Then
  \begin{multline}
    \label{rp_w}
    \|\duuc W\|^2_{\ell^2_\tau \ell^2_R L^2_tL^2_r(C^{1,R}_\tau)}
    + \|\la u\ra^{-\frac{1}{2}} r^{\frac{1}{2}} \duuc
    W\|^2_{\ell^\infty_U\ell^2_\tau L^2_tL^2_r(C^{1,U}_\tau)}
 \\   \lesssim \|r^{\frac{1}{2}} \duuc W(4,\cd)\|^2_{L^2_r}
+ \int_4^T\int_0^\infty r |\Box_c W| |\duuc W|\,dr\,dt.
  \end{multline}
\end{proposition}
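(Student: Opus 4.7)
The plan is to run the $r^p$-weighted multiplier estimate of Dafermos-Rodnianski at $p=1$, paired with an Alinhac-style ghost weight at speed $1$, even though $W$ satisfies a speed-$c$ wave equation. Concretely, I would pair $\Box_c W$ with the multiplier $r\,e^{-\sigma_U(t-r)}\duuc W$ and integrate over $[4,T]\times[0,\infty)$.

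The algebraic step uses $\Box_c = (\partial_t-c\partial_r)(\partial_t+c\partial_r)$ together with $(\partial_t+c\partial_r)=2c\,\duuc$, which gives $\Box_c W\cdot \duuc W = c(\partial_t-c\partial_r)(\duuc W)^2$, and then
\begin{equation*}
r\,\Box_c W\cdot\duuc W \;=\; c(\partial_t-c\partial_r)\bigl(r(\duuc W)^2\bigr) + c^2(\duuc W)^2,
\end{equation*}
the extra $c^2(\duuc W)^2$ coming from $(\partial_t-c\partial_r)r=-c$. Integrating the first piece by parts against $e^{-\sigma_U(t-r)}$ produces a non-negative $t=T$ boundary term that I drop, a $t=4$ boundary controlled by $\|r^{\frac{1}{2}}\duuc W(4,\cdot)\|_{L^2_r}^2$, and an $r=0$ boundary that vanishes automatically thanks to the $r$ weight (unlike Proposition \ref{prop_leghost_v}, no pairing with the good-derivative estimate is required here); the spatial infinity boundary vanishes by compact support. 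Differentiating the ghost weight contributes the non-negative interior term
\begin{equation*}
(c+c^2)\int_4^T\int_0^\infty \sigma'_U(t-r)\,e^{-\sigma_U(t-r)}\,r\,(\duuc W)^2\,dr\,dt,
\end{equation*}
whose sign is favorable for any $c>0$ and does not require the ghost-weight speed to match that of the equation. Combined with the bulk term $c^2\int\int e^{-\sigma_U(t-r)}(\duuc W)^2\,dr\,dt$ sitting on the left, this gives the master inequality with right-hand side $\lesssim \|r^{\frac{1}{2}}\duuc W(4,\cdot)\|_{L^2_r}^2 + \int\int r|\Box_c W||\duuc W|\,dr\,dt$.

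The two norms on the left are then extracted in the usual way. For the first, $e^{-\sigma_U}$ is uniformly bounded below (since $\sigma_U$ is uniformly bounded in $U$), so the dyadic decomposition into the disjoint regions $C^{1,R}_\tau$ yields $\|\duuc W\|^2_{\ell^2_\tau\ell^2_R L^2_tL^2_r(C^{1,R}_\tau)}\lesssim \text{RHS}$. For the second, I restrict the interior integral to $\{t-r\in[U,2U]\}$, invoke \eqref{weights_properties} to obtain $\sigma'_U(t-r)\gtrsim U^{-1}$, and identify $\la u\ra\approx U$ on $C^{1,U}_\tau$; summing over $\tau$ gives the bound with constant uniform in $U$, and taking the supremum in $U$ recovers $\|\la u\ra^{-\frac{1}{2}}r^{\frac{1}{2}}\duuc W\|^2_{\ell^\infty_U\ell^2_\tau L^2_tL^2_r(C^{1,U}_\tau)}$. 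The only subtle point is confirming the favorable sign $c+c^2>0$ for the cross-speed ghost weight; once that is in hand, the $r$ multiplier eliminates the $r=0$ boundary obstruction that forced the coupling in Proposition \ref{prop_leghost_v}.
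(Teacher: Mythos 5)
Your proposal is correct and matches the paper's argument essentially line by line: the same multiplier $r\,e^{-\sigma_U(t-r)}(\partial_t+c\partial_r)W$, the same integration by parts yielding a droppable $t=T$ boundary, a data term at $t=4$, a bulk term from $(\partial_t-c\partial_r)r=-c$, and the ghost-weight term with positive coefficient $(1+c)$, followed by the same use of the uniform boundedness of $\sigma_\theta$ and the lower bound \eqref{weights_properties} on $\sigma'_U$ to extract the two norms. The only cosmetic difference is that you normalize through $\duuc$ rather than $\partial_t+c\partial_r$, and you make explicit the observation (implicit in the paper) that the $r$ weight kills the $r=0$ boundary term so no coupling with the good-derivative estimate is needed.
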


\begin{proof}
  We now start with
\[
   \int_4^T\int_0^\infty r e^{-\sigma_U(t-r)}
   \Box_c W(t,r) (\partial_t+c\partial_r)W(t,r)\,dr\,dt
    =\frac{1}{2}\int_4^T\int_0^\infty r e^{-\sigma_U(t-r)}
    (\partial_t-c\partial_r)\Bigl((\partial_t+c\partial_r)W \Bigr)^2\,dr\,dt,
 \]
  which upon integrating by parts is seen to be the same as
  \begin{multline*}
  \frac{1}{2} \int_0^\infty r e^{-\sigma_U(T-r)}
    \Bigl((\partial_t+c\partial_r)W(T,r)\Bigr)^2\,dr
    -    \frac{1}{2}\int_0^\infty r e^{-\sigma_U(4-r)}
    \Bigl((\partial_t+c\partial_r)W(4,r)\Bigr)^2\,dr
    \\+\frac{c}{2}\int_4^T\int_0^\infty e^{-\sigma_U(t-r)}
    \Bigl((\partial_t+c\partial_r)W\Bigr)^2\,dr\,dt
    +\frac{1+c}{2} \int_4^T\int_0^\infty r\sigma_U'(t-r)
    e^{-\sigma_U(t-r)} \Bigl((\partial_t+c\partial_r)W\Bigr)^2\,dr\,dt.
  \end{multline*}
  After noting the first term is non-negative, we may
  drop it.  The proof is then completed by restricting the range of the last term so that
  \eqref{weights_properties} may be applied and using the boundedness
  of $\sigma_\theta$.
\end{proof}

The preceding lemma will be paired with two Hardy inequalities as $W$
appears in our nonlinearity without a derivative.  The first is a
space-time variant of the standard Hardy inequality.  See, e.g., \cite{M-Rhoads}.

\begin{lemma}\label{lemma_hardy}
 Suppose that $c>0$, $W\in C^1([4,T]\times \R)$ is an odd function, and
that there exists $\tilde{R}>0$ so that $W(t,x)\equiv 0$ whenever $|x|>\tilde{R}$.
Then
\begin{equation}\label{hardy}
  \| r^{-1} W\|_{L^2_tL^2_x([4,T]\times\R)}
  \lesssim \|r^{-\frac{1}{2}} W(4,\cd)\|_{L^2_x} + \|\duuc W\|_{L^2_tL^2_x([4,T]\times\R)}.
\end{equation}
\end{lemma}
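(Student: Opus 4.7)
\emph{Plan of proof.} The plan is to run a standard spacetime Hardy argument adapted to the null derivative $\duuc$. Everything will hinge on the identity $\duuc(r^{-1}) = -\tfrac{1}{2}r^{-2}$, which by the product rule yields the pointwise relation
\[\tfrac{1}{2}r^{-2}W^2 \;=\; 2\,r^{-1} W \duuc W \;-\; \duuc(r^{-1}W^2).\]
Because $W$ is odd in $x$, the $L^2_x(\R)$ norms in \eqref{hardy} are, up to constants, controlled by the corresponding $L^2_r(\R_+)$ norms, so I would reduce to the half-line and work with the identity above.

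I would then integrate this identity over $[4,T]\times \R_+$ with respect to $dr\,dt$. On the right side, I would expand $\duuc = \tfrac{1}{2c}\partial_t + \tfrac{1}{2}\partial_r$ and integrate by parts separately in the two variables. The $\partial_t$ piece contributes the initial data term $\tfrac{1}{2c}\|r^{-1/2}W(4,\cd)\|^2_{L^2_r}$ together with a favorably signed terminal term at $t=T$ which I simply drop. The $\partial_r$ piece produces spatial boundary contributions at $r=0$ and at $r=\infty$; the latter vanishes immediately by the compact support hypothesis.

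The vanishing of the $r=0$ boundary term is the one step where the oddness assumption genuinely enters: since $W$ is $C^1$ and odd in $x$, $W(t,0)=0$ and $W(t,r) = O(r)$ as $r\to 0^+$, so $r^{-1}W(t,r)^2 = O(r) \to 0$. This is the only mildly delicate point of the argument, and it is the main reason a clean estimate of this form is available in the radial setting. Once the boundary terms are disposed of, the remaining cross term is bounded by $2\|r^{-1}W\|_{L^2_tL^2_r}\|\duuc W\|_{L^2_tL^2_r}$ via Cauchy--Schwarz, and Young's inequality absorbs a small multiple of $\|r^{-1}W\|^2_{L^2_tL^2_r}$ into the left-hand side. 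Converting back from $L^2_r(\R_+)$ to $L^2_x(\R)$ using oddness of $W$ (and the fact that $\|\duuc W\|_{L^2_r(\R_+)} \le \|\duuc W\|_{L^2_x(\R)}$) produces the stated estimate \eqref{hardy}.
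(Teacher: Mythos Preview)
Your proposal is correct and is essentially the same argument as the paper's own proof. Both proceed by writing $r^{-2}W^2$ as a constant multiple of $(\partial_t+c\partial_r)(r^{-1})\cdot W^2$, integrating by parts over $[4,T]\times\R_+$, using the oddness and $C^1$ regularity of $W$ to dispose of the $r=0$ boundary contribution, dropping the favorably signed $t=T$ term, and closing with Cauchy--Schwarz and Young's inequality to absorb the cross term.
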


\begin{proof}
  We write
\[  
     \int_4^T\int r^{-2} W^2\,dx\,dt =
  -\frac{2}{c}\int_4^T \int_0^{\infty} (\partial_t+c\partial_r)
  r^{-1} \cdot W^2\,dr\,dt.
  \]
As $W$ is $C^1$ and odd, the Mean Value Theorem gives that $W=\O(r)$
near $r=0$.  Thus, these integrals are well-defined and $r^{-1}
W^2(t,r)\to 0$ as $r\to 0$.
  Integrating by parts reveals
\begin{multline}\label{Hardy1}
  \int_4^T\int r^{-2} W^2\,dx\,dt +\frac{2}{c}
   \int_0^\infty r^{-1} (W(T,r))^2 \,dr 
   \\= \frac{2}{c}
   \int_0^\infty r^{-1}(W(4,r))^2 \, dr +
   \frac{4}{c}\int_4^T\int_0^\infty
   r^{-1}W(\partial_t+c\partial_r) W\,dr\,dt.
 \end{multline}
 The Schwarz inequality shows that
 \begin{align*}
   \frac{4}{c}\int_4^T\int_0^\infty
   r^{-1}W(\partial_t+c\partial_r) W\,dr\,dt &\lesssim
                                                         \|r^{-1}
                                                         W\|_{L^2_tL^2_x([4,T]\times\R)}
                                                         \|\duuc
                                                         W\|_{L^2_tL^2_x([4,T]\times\R)}\\
   &\le \frac{1}{2} \|r^{-1}  W\|^2_{L^2_tL^2_x([4,T]\times\R)}
     + C \|\duuc  W\|^2_{L^2_tL^2_x([4,T]\times\R)}. 
 \end{align*}
 Plugging this into \eqref{Hardy1} and absorbing the $\|r^{-1}
 W\|_{L^2_tL^2_x}$ term back into the left side yields the desired estimate.
\end{proof}

The second Hardy inequality that we rely upon takes advantage of the
multiple speed structure.
\begin{lemma}\label{lemma_hardy_mixed}
  Suppose $c>1$, $W\in C^1([4,T]\times \R)$ is an odd function, and that
there exists $\tilde{R}>0$ so that $W(t,x)\equiv 0$ when $|x|>\tilde{R}$.
Then
  \begin{equation}
    \label{hardy_mixed}
\| \la u\ra^{-\frac{1}{2}} r^{-\frac{1}{2}} W
\|_{\ell^\infty_U \ell^2_\tau L^2_tL^2_x(C^{1,U}_\tau)}
    \\\lesssim \|r^{-\frac{1}{2}} W(4,\cd)\|_{L^2_x}
    + \|\duuc W\|_{L^2_tL^2_x([4,T]\times\R)}.
  \end{equation}
\end{lemma}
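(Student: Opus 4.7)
The plan is to adapt the multiplier argument of Lemma~\ref{lemma_hardy}, incorporating an additional weight in the null coordinate $u=t-r$ that isolates a single dyadic shell $\la u\ra\approx U$. I would fix a dyadic $U\ge 2$ and choose a smooth non-decreasing $\Psi:\R\to[0,1]$ with $\Psi\equiv 0$ on $(-\infty,1/2]$, $\Psi\equiv 1$ on $[4,\infty)$, and $\Psi'(s)\gtrsim 1$ on $[1,2]$; then define the multiplier $\phi_U(t,r) = \Psi(u/U)/r$. Since $(\partial_t+c\partial_r)u = 1-c$ and $(\partial_t+c\partial_r)(1/r) = -c/r^2$, a direct computation yields
\[-(\partial_t+c\partial_r)\phi_U = \frac{c-1}{rU}\Psi'(u/U) + \frac{c}{r^2}\Psi(u/U),\]
which is pointwise non-negative because $c>1$. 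The first summand is what will produce the $\la u\ra^{-1}$ localization, and it is precisely here that the hypothesis $c>1$ enters essentially; for $c=1$ this term would vanish, so this is the place where the multi-speed structure is indispensable.

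The next step is to integrate $(\partial_t+c\partial_r)[\phi_U W^2]$ over $[4,T]\times\R_+$. The $r=\infty$ boundary vanishes by compact support; the $r=0$ boundary vanishes because $\phi_U W^2 = \O(r)$ near the origin (as $W=\O(r)$ there by oddness and $C^1$ regularity). Integrating by parts in $t$, using $(\partial_t+c\partial_r)=2c\,\duuc$, and dropping the non-negative contribution at $t=T$, I obtain
\[\int_4^T\!\int_0^\infty\!\Bigl[\tfrac{c-1}{rU}\Psi'(u/U)+\tfrac{c}{r^2}\Psi(u/U)\Bigr]W^2\,dr\,dt \le 4c\int_4^T\!\int_0^\infty\phi_U W\,\duuc W\,dr\,dt + \int_0^\infty\tfrac{\Psi(u/U)|_{t=4}}{r}W(4,r)^2\,dr.\]
Using $\Psi\le 1$ together with the oddness of $W$, the initial data term is bounded by $\|r^{-1/2}W(4,\cd)\|_{L^2_x}^2$; the cross term is handled by Cauchy--Schwarz, Young's inequality, and Lemma~\ref{lemma_hardy}, giving
\[\int_4^T\!\int_0^\infty\phi_U W\,\duuc W\,dr\,dt \le \|r^{-1}W\|_{L^2_tL^2_x}\|\duuc W\|_{L^2_tL^2_x} \lesssim \|r^{-1/2}W(4,\cd)\|_{L^2_x}^2 + \|\duuc W\|_{L^2_tL^2_x}^2.\]

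Finally, after discarding the non-negative $c\Psi/r^2$ contribution on the left and observing that $\Psi'(u/U)\gtrsim 1$ on $\{u\in[U,2U]\}\supseteq\bigcup_\tau C^{1,U}_\tau$, on which $\la u\ra\approx U$, the conclusion is
\[\sum_\tau \int\!\!\int_{C^{1,U}_\tau}\la u\ra^{-1}r^{-1}W^2\,dr\,dt \lesssim U^{-1}\int_4^T\!\int_0^\infty\tfrac{\Psi'(u/U)}{r}W^2\,dr\,dt \lesssim \|r^{-1/2}W(4,\cd)\|_{L^2_x}^2 + \|\duuc W\|_{L^2_tL^2_x}^2,\]
with an implicit constant independent of $U$; taking the supremum over dyadic $U$ yields \eqref{hardy_mixed}. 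I expect the main obstacle to be engineering a single multiplier whose speed-$c$ null derivative carries both the correct sign and the desired $\la u\ra^{-1}$-localization: the two requirements are compatible exactly when $c>1$, which is why the argument fails in a single-speed setting.
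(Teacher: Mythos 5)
Your proof is correct and is structurally the same as the paper's: a multiplier argument testing against the speed-$c$ null derivative with a weight $r^{-1}\cdot(\text{function of }u)$, exploiting that $(\partial_t+c\partial_r)u=1-c<0$ (hence $c>1$) makes the bulk term coercive and localized to $\la u\ra\approx U$. The tactical difference is in the choice of $u$-weight and the consequent handling of the cross term. The paper uses the exponential ghost weight $e^{\sigma_U(t-r)}$, which is comparable to $1$ uniformly in $U$; integration by parts on $r^{-1}e^{\sigma_U}$ produces a term $-\tfrac{2c}{c-1}\int r^{-2}e^{\sigma_U}W^2$ of a definite sign that can absorb the $r^{-2}W^2$ piece coming from Cauchy--Schwarz, so the paper's argument is self-contained. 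Your cutoff $\Psi(u/U)$ is not bounded away from zero, so the positive $c\,r^{-2}\Psi\,W^2$ bulk term cannot absorb the unweighted $r^{-2}W^2$ from your application of Cauchy--Schwarz, and you fall back on Lemma~\ref{lemma_hardy} instead. That is perfectly valid since Lemma~\ref{lemma_hardy} is already proved; but you could also recover a self-contained proof in the paper's spirit by retaining the $\Psi$ weight in Cauchy--Schwarz (bounding $\int r^{-1}\Psi\,W\,\duuc W\le \epsilon\int r^{-2}\Psi^2 W^2 + C_\epsilon\int(\duuc W)^2$, then using $\Psi^2\le\Psi$ to absorb into your own coercive term).
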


\begin{proof}
  The argument here is similar to the preceding, but we take
  advantage of the difference in speeds between the weight $\la
  u\ra^{-1}$ and the speed $c$ of the equation.  To this end, using
  \eqref{weights_properties}, we observe
\[
    \iint_{\{\la t-r\ra \approx U\}} \la u\ra^{-1} r^{-1} W^2 \,dx\,dt
    \le \frac{2}{1-c} \int_4^T\int_0^\infty r^{-1}(\partial_t+c\partial_r)
    (e^{\sigma_U(t-r)}) \cdot W^2\,dr\,dt
  \]
  and use integration by parts to see that the right side is equivalent to
\begin{multline}\label{gwhardy}
\frac{2}{1-c} \int_0^\infty r^{-1} e^{\sigma_U(T-r)}
      (W(T,r))^2\,dr
    + \frac{2}{c-1} \int_0^\infty r^{-1} e^{\sigma_U(4-r)}
      (W(4,r))^2\,dr\\-\frac{2c}{c-1} \int_4^T\int_0^\infty
      r^{-2} e^{\sigma_U(t-r)} W^2\,dr\,dt
   +\frac{4}{c-1}\int_4^T\int_0^\infty r^{-1} e^{\sigma_U(t-r)} W(\partial_t+c\partial_r)W\,dr\,dt.
 \end{multline}
 Using the Cauchy-Schwarz inequality to bound
  \begin{multline} 
  \frac{4}{c-1}\int_4^T\int_0^\infty r^{-1} e^{\sigma_U(t-r)} W (\partial_t+c\partial_r)W\,dr\,dt
    \\\le \frac{c}{2(c-1)} \int_4^T\int_0^\infty r^{-2}
    e^{\sigma_U(t-r)} W^2\,dr\,dt
    + C \int_4^T\int_0^\infty e^{\sigma_U(t-r)} (\duuc
    W)^2\,dr\,dt,
  \end{multline}
plugging this into \eqref{gwhardy}, and omitting nonpositive terms from the right produces the desired result. 
\end{proof}

\medskip
\section{Proof of Theorem \ref{main_theorem}}


To solve \eqref{ohta_1d_system_odd}, we set up an iteration.
Let $W_0\equiv V_0\equiv 0$, and let $V_j, W_j$ solve
\begin{equation}
  \label{ohta_j}
  \begin{cases}
    \Box V_j = x^{-1} W_{j-1} \partial_t V_{j-1},\\
    \Box_c W_j =x^{-1} (\partial_t V_{j-1})^2,\\
    V_j(4,\cd)=\varepsilon V_{(0)},\quad \partial_t V_j(4,\cd)=\varepsilon
    V_{(1)},\\
    W_j(4,\cd)=\varepsilon W_{(0)},\quad \partial_t W_j(4,\cd)=\varepsilon W_{(1)}.
  \end{cases}
\end{equation}

We shall show that the sequence $((V_j, W_j))_{j=0}^\infty$ is Cauchy on
$[4,T_\varepsilon]\times \R$ in an
appropriate sense, and by standard results the limit is the desired
solution.

\subsection{Boundedness}  We first show a uniform boundedness of the
sequence $((V_j,W_j))$.  Let
\begin{multline}
\label{M'}
  M_j = \|r^{-\frac{1}{4}}\la r\ra^{-\frac{1}{4}} \la u\ra^{\frac{1}{2}} S^{\le 7} \du
  V_j\|_{\ell^\infty_R \ell^2_\tau L^2_tL^2_r(C^{1,R}_\tau)}  
  + \|\la u_c\ra^{-\frac{1}{2}}\la u\ra ^{\frac{1}{2}} S^{\le 7}
  \du V_j\|_{\ell^\infty_{U_c}\ell^2_\tau L^2_tL^2_r(C^{c,U_c}_\tau)}
  \\+\|r^{-\frac{1}{4}}\la r\ra^{-\frac{1}{4}} \la \uu\ra^{\frac{1}{2}} S^{\le 7} \duu
  V_j\|_{\ell^\infty_R \ell^2_\tau L^2_tL^2_r(C^{1,R}_\tau)}  
  + \|\la u_c\ra^{-\frac{1}{2}}\la\uu\ra^{\frac{1}{2}} S^{\le 7}
  \duu V_j\|_{\ell^\infty_{U_c}\ell^2_\tau L^2_tL^2_r(C^{c,U_c}_\tau)}
  \\ +
  \|\la u\ra^{-\frac{1}{2}}\la\uu\ra^{\frac{1}{2}} S^{\le 7}
  \duu V_j\|_{\ell^\infty_{U}\ell^2_\tau L^2_tL^2_r(C^{1,U}_\tau)}
  +\|r^{-\frac{1}{4}}\la r\ra^{-\frac{1}{4}} S^{\le 10} \partial
  V_j\|_{\ell^\infty_R \ell^2_\tau L^2_tL^2_r(C^{1,R}_\tau)}  
  \\+ \|\la
  u_c\ra^{-\frac{1}{2}} S^{\le 10}
  \partial V_j\|_{\ell^\infty_{U_c}\ell^2_\tau L^2_tL^2_r(C^{c,U_c}_\tau)}
  +\|\la
  u\ra^{-\frac{1}{2}}S^{\le 10}
  \duu V_j\|_{\ell^\infty_{U}\ell^2_\tau L^2_tL^2_r(C^{1,U}_\tau)}
  \\
  +\|r^{\frac{1}{2}}\la
  u\ra^{-\frac{1}{2}} S^{\le 10}
  \duuc W_j\|_{\ell^\infty_{U}\ell^2_\tau L^2_tL^2_r(C^{1,U}_\tau)}
  +\|r^{-\frac{1}{2}}\la
  u\ra^{-\frac{1}{2}} S^{\le 10}
  W_j\|_{\ell^\infty_{U}\ell^2_\tau L^2_tL^2_r(C^{1,U}_\tau)}
\\ +\|S^{\le 10} \duuc
  W_j\|_{\ell^2_R \ell^2_\tau L^2_tL^2_r(C^{1,R}_\tau)}  
  +\|r^{-1} S^{\le 10}  W_j\|_{\ell^2_R \ell^2_\tau L^2_tL^2_r(C^{c,R}_\tau)}.
\end{multline}
We will label these terms $(I)_j, (II)_j,\dots, (XII)_j$.

Since $\Box V_1\equiv\Box_c W_1\equiv0$, the smallness of the initial
data
along with \eqref{leghost_v}, \eqref{rp_w}, \eqref{hardy}, and \eqref{hardy_mixed}
imply that there exists $C_0$ so that
\[M_1 \le C_0 \varepsilon.\]
Moreover, $V_1$ is supported within $C^1$.
Under the assumption that
\begin{equation}\label{bdd_ih}
M_k\le 2C_0\varepsilon, \quad\text{ for } 0\le k\le
j-1,
\end{equation}
and that $\text{supp } V_{j-1}\subset C^1$,
we shall show that there exists a fixed constant $C$ such that
\begin{equation}\label{bdd_goal}M_j^2 \le (C_0 \varepsilon)^2 + C (2C_0\varepsilon)^2
  (\log(2+T_\varepsilon))^{\frac{1}{2}} M_j
\end{equation}
for $(t,r)\in [4,T_\varepsilon]\times \R_+$.  It will also follow
immediately that $\text{supp }V_j\subset C^1$.  
Using \eqref{lifespan} and absorbing the $M_j$ back into the left
side, this implies
\[M_j^2 \le 2(C_0 \varepsilon)^2 + \tilde{C} \tilde{c}
  \varepsilon^2.\]
Provided the $\tilde{c}$ in \eqref{lifespan} is chosen to be sufficiently small,
this shows that
\[M_j \le 2C_0\varepsilon\]
and completes the inductive proof of boundedness.

\subsubsection{Decay bounds}
To aid in the proof of \eqref{bdd_goal}, we shall first show some
auxiliary decay bounds provided \eqref{bdd_ih} holds.  In particular,
we start with proofs that, for $k\le j-1$:
\begin{align}
  \label{ptwise_W_crct}
  \|\la \uu_c \ra^{\frac{1}{2}} r^{-\frac{1}{2}}S^{\le 7} W_k\|_{\ell^2_\tau \ell^2_{R\le c\tau/2} L^\infty_t
    L^\infty_r(C^{c,R}_\tau)} &\lesssim \varepsilon,\\
  \label{ptwise_W_cuct}
   \|\la u_c\ra^{\frac{1}{2}}\la r\ra^{-\frac{1}{2}}
   S^{\le 7} W_k\|_{\ell^2_\tau \ell^2_{U_c\le c\tau/4}
  L^\infty_tL^\infty_r(C^{c,U_c}_\tau)}  &\lesssim \varepsilon,\\
    \label{ptwise_V_crt}
    \|\la\uu\ra S^{\le 5} \partial V_k\|_{\ell^\infty_R
  \ell^2_{\tau\ge 2R} L^\infty_t
    L^\infty_r(C^{1,R}_\tau)} &\lesssim \varepsilon,\\
  \label{ptwise_V_cut}
  \|\la u \ra S^{\le 5} \partial V_k\|_{\ell^
  \infty_{U} \ell^\infty_{\tau\ge 4U} L^\infty_t L^\infty_r(C^{1,U}_\tau)} &\lesssim \varepsilon.
\end{align}

We shall provide the proofs of these in order.  The first two are easy
corollaries of Corollary \ref{cor_wklsob}.
\begin{proof}[Proof of \eqref{ptwise_W_crct}]
This is an immediate consequence of \eqref{wcrt} and \eqref{bdd_ih}.
We apply \eqref{wcrt} to $S^{\le 7}W_k$ and note that if $W_k$ is odd then so
is $SW_k$.  Then,
indeed, 
\begin{align*} \|\la \uu_c \ra^{\frac{1}{2}} r^{-\frac{1}{2}}S^{\le 7} W_k&\|_{\ell^2_\tau \ell^2_{R\le c\tau/2} L^\infty_t
    L^\infty_r(C^{c,R}_\tau)} \\&\lesssim \|\la r\ra^{-1} S^{\le 9}
  W_k\|_{\ell^2_\tau \ell^2_{R\le c\tau/2} L^2_tL^2_r
    (\tC^{c,R}_\tau)}
  +\|S^{\le 8} \duuc W_k\|_{\ell^2_\tau\ell^2_{R\le c\tau/2}
                                L^2_tL^2_r(\tC^{c,R}_\tau)}\\
  &\lesssim (XII)_k+(XI)_k,
\end{align*}
and the bound follows from \eqref{bdd_ih}.
\end{proof}

\begin{proof}[Proof of \eqref{ptwise_W_cuct}]
Applying \eqref{wcut}, we have
\begin{align*}
  \|\la u_c\ra^{\frac{1}{2}}\la r\ra^{-\frac{1}{2}}
  S^{\le 7} W_k&\Bigr\|_{\ell^2_\tau \ell^2_{U_c\le c\tau/4}
  L^\infty_tL^\infty_r(C^{c,U_c}_\tau)} \\& \lesssim \|\la r\ra^{-1}
                                          S^{\le 9}
                                          W_k\|_{\ell^2_\tau\ell^2_{U_c\le
                                          c\tau/4}
                                          L^2_tL^2_r(\tC^{c,U_c}_\tau)}
                                          + \|S^{\le 8} \duuc
                                          W_k\|_{\ell^2_\tau
                                          \ell^2_{U_c\le c\tau/4} L^2_tL^2_r(\tC^{c,U_c}_\tau)}
  \\
  &\lesssim  (XII)_k+(XI)_k.
\end{align*}
And \eqref{ptwise_W_cuct} is then a direct consequence of \eqref{bdd_ih}.
\end{proof}

We now proceed to the pointwise bounds for $V_k$ with $k\le j-1$ using Lemma
\ref{vbounds}.  We first consider the bound away from the light cone.

\begin{proof}[Proof of \eqref{ptwise_V_crt}]
    Applying \eqref{kscrt}, and commuting $S$ with $\Box$ and $\partial$, we have
    \begin{multline*}
        \|\la\uu\ra S^{\le 5} \partial V_k \|_{\ell^\infty_R
          \ell^2_{\tau \ge 2R}L^\infty_t
        L^\infty_r(C^{1,R}_\tau)}
        \\\lesssim 
        \|\la r \ra^{-\frac{1}{2}} \la\uu\ra^\frac{1}{2} S^{\le 7} \partial V_k\|_{\ell^\infty_R
          \ell^2_{\tau \ge 2R}L^2_t
        L^2_r(\tC^{1,R}_\tau)}
        + \|\la r \ra^\frac{1}{4} r^{\frac{1}{4}}\la\uu\ra^\frac{1}{2} S^{\le 6} \Box V_k\|_{\ell^\infty_R
          \ell^2_{\tau \ge 2R}L^2_t L^2_r (\tC^{1,R}_\tau)}.
  \end{multline*}
  Splitting $\partial$ into $(\du, \duu)$, we bound the first term by 
   \[
        \|\la r \ra^{-\frac{1}{2}} \la \uu\ra^\frac{1}{2} S^{\le 7}
      \duu V_k\|_{ \ell^\infty_R
          \ell^2_{\tau \ge 2R}L^2_t L^2_r
                (\tC^{1,R}_\tau)} 
        + \|\la r \ra^{-\frac{1}{2}} \la u\ra^\frac{1}{2} S^{\le 7}
                \du V_k\|_{ \ell^\infty_R
          \ell^2_{\tau \ge 2R}L^2_t
                L^2_r (\tC^{1,R}_\tau)} 
        \lc
        (III)_k+(I)_k,\]
    where we exploit that $u\approx\uu$ on $\tC^{1, R}_\tau$ regions with
    $R\le \tau/2$.  This term is $\O(\varepsilon)$ by \eqref{bdd_ih}.

    Moving to the second term, we crudely apply the product rule to $S^{\le6}\Box V_k$, giving
    \begin{equation}\label{prodrule}|S^{\le 6} \Box V_k|\le r^{-1} |S^{\le 6} W_{k-1}||S^{\le 6}
      \partial_t V_{k-1}|,
    \end{equation}
    where we note that $|Sr^{-1}|=r^{-1}$.  On $\tC^{1,R}_\tau$ with
    $R\le \tau/2$, we have
    \[\la r\ra \lesssim \la \uu_c\ra \approx \la \uu\ra \approx \la
      u\ra\approx \tau.\]
    This allows us to bound:
\begin{multline*}   
 \|\la r \ra^\frac{1}{4} r^{\frac{1}{4}}\la\uu\ra^\frac{1}{2} S^{\le
   6} \Box V_k\|_{\ell^\infty_R \ell^2_{\tau \ge 2R} L^2_t L^2_r
   (\tC^{1,R}_\tau)}
 \lesssim \|r^{-\frac{1}{2}}\la \uu_c\ra^{\frac{1}{2}}
 S^{\le 6}
 W_{k-1}\|_{\ell^\infty_\tau\ell^\infty_{R\le\tau/2}L^\infty_tL^\infty_r(\tC^{1,R}_\tau)}
\\\times
 \Bigl(
   \|r^{-\frac{1}{4}}\la r\ra^{-\frac{1}{4}} \la
   \uu\ra^{\frac{1}{2}}S^{\le 6} \partial_\uu V_{k-1}\|_{\ell^\infty_R \ell^2_{\tau\ge 2R}L^2_tL^2_r(\tC^{1,R}_\tau)} +
   \|r^{-\frac{1}{4}}\la r\ra^{-\frac{1}{4}} \la
   u\ra^{\frac{1}{2}}S^{\le 6}\partial_u V_{k-1}\|_{\ell^\infty_R \ell^2_{\tau\ge 2R}L^2_tL^2_r(\tC^{1,R}_\tau)} \Bigr).
 \end{multline*}
We note that since $c>1$, each $\tC^{1,R}_\tau$ with $R\le \tau/2$ is contained in a
finite number of regions $C^{c,\tR}_\ttau$ with $\tR\le c\ttau/2$.
This allows us to apply \eqref{ptwise_W_crct} to the first factor in the
right side, which shows that this is $\lesssim \varepsilon
[(III)_{k-1}+(I)_{k-1}]$.  The desired bound is then a consequence of \eqref{bdd_ih}.\end{proof}

We finally consider the pointwise bound for $\partial V_k$ away from the
light cone.
\begin{proof}[Proof of \eqref{ptwise_V_cut}]  Fixing $U$ and $\tau$
  with $U\le \tau/4$, 
we start with an application of \eqref{kscut}, which gives that
\begin{equation}\label{ptwise_V_cut_1}
  \|\la u\ra S^{\le 5}\partial
  V_k\|_{L^\infty_tL^\infty_r(C^{1,U}_\tau)}
  \lesssim \|\la u\ra^{\frac{1}{2}}\la \uu\ra^{-\frac{1}{2}}
  S^{\le 7} \partial V_k\|_{L^2_tL^2_r(\tC^{1,U}_\tau)}
  + \|\la u\ra^{\frac{1}{2}}\la \uu\ra^{\frac{1}{2}} S^{\le 6} \Box
  V_k\|_{L^2_tL^2_r(\tC^{1,U}_\tau)}.
\end{equation}
For the first term, we have
\begin{align*}\|\la u\ra^{\frac{1}{2}}\la \uu\ra^{-\frac{1}{2}}
  S^{\le 7} \partial V_k\|_{L^2_tL^2_r(\tC^{1,U}_\tau)}
&\lesssim 
\|\la r\ra^{-\frac{1}{2}} \la u\ra^{\frac{1}{2}}
  S^{\le 7} \partial_u V_k\|_{L^2_tL^2_r(\tC^{1,U}_\tau)}
  +
  \|\la r\ra^{-\frac{1}{2}}\la \uu\ra^{\frac{1}{2}}
                                                          S^{\le 7} \partial_\uu V_k\|_{L^2_tL^2_r(\tC^{1,U}_\tau)}\\
  &\le (I)_k + (III)_k,
\end{align*}
and the bound is an immediate consequence of \eqref{bdd_ih}.

For the second term in \eqref{ptwise_V_cut_1}, we use
\eqref{prodrule}.
In order to use \eqref{ptwise_W_crct} and \eqref{ptwise_W_cuct}, we
will consider separately when $\tC^{1,U}_\tau$ intersects $\bigcup_{R\le
  c\ttau/2} C^{c,R}_\ttau$ and $\bigcup_{U_c\le c\ttau/4} C^{c,U_c}_\ttau$.
 Away
from the speed $c$ light cone, we have
\begin{multline*}
  \|\la u\ra^{\frac{1}{2}}\la \uu\ra^{\frac{1}{2}} S^{\le 6} \Box
  V_k\|_{L^2_tL^2_r(\tC^{1,U}_\tau \cap (\bigcup_{\ttau}\bigcup_{R\le c\ttau/2}
    C^{c,R}_\ttau))}
  \lesssim \|\la \uu_c\ra^{\frac{1}{2}} r^{-\frac{1}{2}} S^{\le 6}
  W_{k-1} \|_{\ell^\infty_{\ttau} \ell^\infty_{R\le c\ttau/2}
    L^\infty_tL^\infty_r(C^{c,R}_\ttau)}
  \\\times
 \Bigl(\|\la r\ra^{-\frac{1}{2}} \la u\ra^{\frac{1}{2}}S^{\le
   6}\partial_u V_{k-1}\|_{L^2_tL^2_r(\tC^{1,U}_\tau)}
   +\|\la r\ra^{-\frac{1}{2}} \la \uu\ra^{\frac{1}{2}}S^{\le
   6}\partial_\uu V_{k-1}\|_{L^2_tL^2_r(\tC^{1,U}_\tau)}
   \Bigr),
 \end{multline*}
as $\tC^{1,U}_\tau$ only intersects $\bigcup_{R\le c\ttau/2}
C^{c,R}_\ttau$ for a finite number of $\ttau$.
 Since $\tC^{1,U}_\tau$ (for fixed $U$ and $\tau$) is contained in a finite number of dyadic
regions $C^{1,R}_\ttau$, it follows, using \eqref{ptwise_W_crct} that this is
\[\lesssim \varepsilon [(I)_{k-1}+(III)_{k-1}].\]
After applying \eqref{bdd_ih}, supremums can be taken over $U$ and
$\tau$ to obtain the desired result.

Near the speed $c$ light cone, we instead see
\begin{multline*}
  \|\la u\ra^{\frac{1}{2}}\la \uu\ra^{\frac{1}{2}} S^{\le 6} \Box
  V_k\|_{L^2_tL^2_r(\tC^{1,U}_\tau \cap (\bigcup_{\ttau}\bigcup_{U_c\le c\ttau/4}
    C^{c,U_c}_\ttau))}
  \lesssim \|\la u_c\ra^{\frac{1}{2}} r^{-\frac{1}{2}} S^{\le 6}
  W_{k-1} \|_{\ell^2_{\ttau}\ell^2_{U_c\le c\ttau/4}
    L^\infty_tL^\infty_r(C^{c,U_c}_\ttau)}
  \\\times
 \Bigl(\|\la u_c\ra^{-\frac{1}{2}} \la u\ra^{\frac{1}{2}}S^{\le
   6}\partial_u V_{k-1}\|_{\ell^\infty_{\ttau}\ell^\infty_{U_c} L^2_tL^2_r(C^{c,U_c}_\ttau)}
   +\|\la u_c\ra^{-\frac{1}{2}} \la \uu\ra^{\frac{1}{2}}S^{\le
   6}\partial_\uu V_{k-1}\|_{\ell^\infty_{\ttau}\ell^\infty_{U_c}L^2_tL^2_r(\tC^{c,U_c}_\ttau)}
   \Bigr).
\end{multline*}
By \eqref{ptwise_W_cuct}, this is bounded by
\[\varepsilon [(II)_{k-1}+(IV)_{k-1}].\]
Thus, an application of \eqref{bdd_ih} and taking supremums over $U$,
$\tau$ yields the result.
\end{proof}

\subsubsection{Bound on terms $(I)_j,\dots, (V)_j$}
By applying \eqref{leghost_v} (with $p=1$) to $S^{\le 7} V_j$, we
see that
\begin{multline}\label{ItoV}
 (I)_j^2 +\dots + (V)_j^2 \le (C_0\varepsilon)^2 +
C  \int_4^{T_\varepsilon}\int_0^\infty \la\uu\ra |S^{\le 7} \Box V_j||\duu S^{\le 7}
  V_j|\,dr\,dt
\\  +C\int_4^{T_\varepsilon}\int_0^\infty \la u\ra |S^{\le 7} \Box V_j| |\du S^{\le 7} V_j|\,dr\,dt.
\end{multline}
We need to show that the latter two terms are bounded by
\begin{equation}\label{ItoVgoal}C\varepsilon^2
  (\log(2+T_\varepsilon))^{\frac{1}{2}}M_j.
\end{equation}
Due to finite speed of propagation, we note $\partial_t V_{j-1}$, and
hence $\Box V_j$, vanishes for $r \ge t-3$.

We shall first decompose these integrals using \eqref{decomp_c} at
speed $c$ and note that $C^1\subset C^c$.  For the
first of the integrals in \eqref{ItoV}, we have
\begin{multline}\label{uucdecomp}\|\la \uu\ra S^{\le 7}\Box V_j\cdot S^{\le 7} \partial_\uu
  V_j\|_{L^1_tL^1_r(C^1)}
  \lesssim \|\la \uu\ra S^{\le 7}\Box V_j\cdot S^{\le 7} \partial_\uu
  V_j\|_{\ell^1_\tau L^1_tL^1_r(C^{c,R=1}_\tau)}
  \\+\|\la \uu\ra S^{\le 7}\Box V_j\cdot S^{\le 7} \partial_\uu
  V_j\|_{\ell^1_\tau \ell^1_{1<R\le c\tau/2} L^1_tL^1_r(C^{c,R}_\tau)}
 +\|\la \uu\ra S^{\le 7}\Box V_j\cdot S^{\le 7} \partial_\uu
  V_j\|_{\ell^1_\tau\ell^1_{U_c\le c\tau/4} L^1_tL^1_r(C^{c,U_c}_\tau)}.
\end{multline}
A naive application of the product rule gives that
 \[|S^{\le 7} \Box V_j|\le r^{-1} |S^{\le 7} W_{j-1}| \Bigl(|S^{\le 7}
   \duu V_{j-1}|+|S^{\le 7} \du V_{j-1}|\Bigr),\]
which we will apply in each instance.

For the first term in the right side of \eqref{uucdecomp}, since $\la
\uu\ra\approx \la u\ra$ on $C^{c,R=1}_\tau$, we have
 \begin{multline*}
   \|\la \uu\ra S^{\le 7}\Box V_j \cdot S^{\le 7} \partial_{\uu}
   V_j\|_{\ell^1_\tau L^1_tL^1_r(C^{c,R=1}_\tau)}
 \lesssim \|\la \uu_c\ra^{\frac{1}{2}} r^{-\frac{1}{2}} S^{\le 7}
   W_{j-1}\|_{\ell^2_\tau L^\infty_tL^\infty_r(C^{c,R=1}_\tau)}\\\times
 \Bigl(\|\la r\ra^{-\frac{1}{4}}r^{-\frac{1}{4}}  \la \uu\ra^{\frac{1}{2}} S^{\le 7}\partial_\uu V_{j-1}
   \|_{\ell^\infty_\tau L^2_t L^2_r(C^{c,R=1}_\tau)}+  \|\la r\ra^{-\frac{1}{4}}r^{-\frac{1}{4}}  \la u\ra^{\frac{1}{2}} S^{\le 7}\partial_u V_{j-1}
   \|_{\ell^\infty_\tau L^2_t L^2_r(C^{c,R=1}_\tau)}\Bigr)  \\\times \|\la r\ra^{-\frac{1}{4}}r^{-\frac{1}{4}} \la \uu\ra^{\frac{1}{2}}
   S^{\le 7}\partial_{\uu} V_j\|_{\ell^2_\tau L^2_tL^2_r(C^{c,R=1}_\tau)},
 \end{multline*}
 which, using \eqref{ptwise_W_crct}, is bounded by $C\varepsilon [(III)_{j-1}+(I)_{j-1}] (III)_j.$
 Since we have \eqref{bdd_ih}, this term is controlled by
 \eqref{ItoVgoal} as desired.

For the next term in \eqref{uucdecomp}, we use the Schwarz inequality
and the facts that $\la \uu\ra \approx \tau$ and $\la r\ra\lesssim
\tau$ on $C^{c, R}_\tau$ with $R\le c\tau/2$
to see that
 \begin{multline}\label{lowVR}
   \|\la \uu\ra S^{\le 7}\Box V_j \cdot S^{\le 7} \partial_{\uu}
   V_j\|_{\ell^1_\tau \ell^1_{1<R\le c\tau/2}  L^1_tL^1_r(C^{c,R}_\tau)}
 \\\lesssim \|\la \uu_c\ra^{\frac{1}{2}} r^{-\frac{1}{2}} S^{\le 7}
   W_{j-1}\|_{\ell^2_\tau \ell^2_{1<R\le c\tau/2} L^\infty_tL^\infty_r(C^{c,R}_\tau)}
 \|\la r\ra^{-\frac{1}{4}}r^{-\frac{1}{4}}  \la \uu\ra^{\frac{1}{2}} S^{\le 7}\partial_\uu V_{j-1}
 \|_{\ell^\infty_\tau \ell^\infty_R L^2_t L^2_r(C^{c,R}_\tau)} \\\times\|\la r\ra^{-\frac{1}{4}}r^{-\frac{1}{4}} \la \uu\ra^{\frac{1}{2}}
  S^{\le 7} \partial_{\uu} V_j\|_{\ell^2_\tau \ell^2_{1<R\le c\tau/2}  L^2_tL^2_r(C^{c,R}_\tau)}
\\+
\|\la \uu_c\ra^{\frac{1}{2}} r^{-\frac{1}{2}} S^{\le 7}
   W_{j-1}\|_{\ell^2_\tau \ell^2_{1<R\le c\tau/2} L^\infty_tL^\infty_r(C^{c,R}_\tau)}
 \|\la r\ra^{-\frac{1}{2}}  \la u\ra^{\frac{1}{2}} S^{\le 7}\partial_u V_{j-1}
   \|_{\ell^\infty_\tau \ell^\infty_R L^2_t L^2_r(C^{c,R}_\tau)}  \\\times \|\la u\ra^{-\frac{1}{2}} \la \uu\ra^{\frac{1}{2}}
   S^{\le 7} \partial_{\uu} V_j\|_{\ell^2_\tau \ell^2_{1<R\le c\tau/2}  L^2_tL^2_r(C^{c,R}_\tau)}.
 \end{multline}
For the last factor in each term, we sum back up and re-decompose
in terms of speed $1$ regions to see
\begin{equation}\label{changespeedR}\begin{split}\|r^{-\frac{1}{4}}\la r\ra^{-\frac{1}{4}} \la \uu\ra^{\frac{1}{2}}
   S^{\le 7}\partial_{\uu} V_j\|_{\ell^2_\tau \ell^2_{1<R\le c\tau/2}
     L^2_tL^2_r(C^{c,R}_\tau)} &+ \|r^{-\frac{1}{4}}\la r\ra^{-\frac{1}{4}} \la \uu\ra^{\frac{1}{2}}
  S^{\le 7} \partial_{\uu} V_j\|_{\ell^2_\tau \ell^2_{U_c\le c\tau/4}
     L^2_tL^2_r(C^{c,U_c}_\tau)} \\
&\lesssim \|r^{-\frac{1}{4}}\la r\ra^{-\frac{1}{r}} \la \uu\ra^{\frac{1}{2}}
  S^{\le 7} \partial_{\uu} V_j\|_{L^2_tL^2_r(C^1)}\\
&\lesssim  (\log(2+T_\varepsilon))^{\frac{1}{2}} \|r^{-\frac{1}{4}}\la r\ra^{-\frac{1}{4}} \la \uu\ra^{\frac{1}{2}}
   S^{\le 7}\partial_{\uu} V_j\|_{\ell^\infty_R \ell^2_{\tau}
     L^2_tL^2_r(C^{1,R}_\tau)}
 \end{split}
\end{equation}
and similarly
\begin{multline}\label{changespeedU}\|\la u\ra^{-\frac{1}{2}} \la \uu\ra^{\frac{1}{2}}
  S^{\le 7} \partial_{\uu} V_j\|_{\ell^2_\tau \ell^2_{1<R\le c\tau/2}
     L^2_tL^2_r(C^{c,R}_\tau)} + \|\la u\ra^{-\frac{1}{2}} \la \uu\ra^{\frac{1}{2}}
  S^{\le 7} \partial_{\uu} V_j\|_{\ell^2_\tau \ell^2_{U_c\le c\tau/4}
     L^2_tL^2_r(C^{c,U_c}_\tau)}
\\\lesssim  (\log(2+T_\varepsilon))^{\frac{1}{2}} \|\la u\ra^{-\frac{1}{2}} \la \uu\ra^{\frac{1}{2}}
   S^{\le 7}\partial_{\uu} V_j\|_{\ell^\infty_U \ell^2_{\tau}
     L^2_tL^2_r(C^{1,U}_\tau)}.
\end{multline}
Applying \eqref{ptwise_W_crct}, it then follows that the left side of
\eqref{lowVR} is bounded by
\[C\varepsilon (III)_{j-1} (\log(2+T_\varepsilon))^{\frac{1}{2}}
  (III)_j
+ C\varepsilon (I)_{j-1} (\log(2+T_\varepsilon))^{\frac{1}{2}} (V)_j,
\]
which, owing to \eqref{bdd_ih}, is controlled by \eqref{ItoVgoal}.

The last term in \eqref{uucdecomp} is handled similarly, but we now
must rely upon \eqref{ptwise_W_cuct}.  Here we use the fact that
$r\approx \tau \approx \la \uu\ra$ on $C^{c,U_c}_\tau$ with $1\le U_c\le c\tau/4$.
Indeed, 
 \begin{multline}\label{lowVU}
   \|\la \uu\ra S^{\le 7}\Box V_j \cdot S^{\le 7} \partial_{\uu}
   V_j\|_{\ell^1_\tau \ell^1_{U_c\le c\tau/4}  L^1_tL^1_r(C^{c,U_c}_\tau)}
 \\\lesssim \|\la u_c\ra^{\frac{1}{2}} r^{-\frac{1}{2}} S^{\le 7}
   W_{j-1}\|_{\ell^2_\tau \ell^2_{U_c\le c\tau/4} L^\infty_tL^\infty_r(C^{c,U_c}_\tau)}
 \|  \la u_c\ra^{-\frac{1}{2}} \la \uu\ra^{\frac{1}{2}} S^{\le 7}\partial_\uu V_{j-1}
 \|_{\ell^\infty_\tau \ell^\infty_{U_c} L^2_t L^2_r(C^{c,U_c}_\tau)}
 \\\times\|\la r\ra^{-\frac{1}{4}}r^{-\frac{1}{4}}\la \uu\ra^{\frac{1}{2}}
 S^{\le 7}  \partial_{\uu} V_j\|_{\ell^2_\tau \ell^2_{U_c\le c\tau/4}  L^2_tL^2_r(C^{c,U_c}_\tau)}
\\+
\|\la u_c\ra^{\frac{1}{2}} r^{-\frac{1}{2}} S^{\le 7}
   W_{j-1}\|_{\ell^2_\tau \ell^2_{U_c\le c\tau/4} L^\infty_tL^\infty_r(C^{c,U_c}_\tau)}
 \|\la u_c\ra^{-\frac{1}{2}}  \la u\ra^{\frac{1}{2}} S^{\le 7}\partial_u V_{j-1}
   \|_{\ell^\infty_\tau \ell^\infty_{U_c} L^2_t L^2_r(C^{c,U_c}_\tau)}  \\\times \|\la u\ra^{-\frac{1}{2}} \la \uu\ra^{\frac{1}{2}}S^{\le 7}
   \partial_{\uu} V_j\|_{\ell^2_\tau \ell^2_{U_c\le c\tau/4}  L^2_tL^2_r(C^{c,U_c}_\tau)}.
 \end{multline}
Using \eqref{ptwise_W_cuct}, \eqref{changespeedR}, and \eqref{changespeedU}, it follows that
this is
\[\lesssim \varepsilon (IV)_{j-1}
  (\log(2+T_\varepsilon))^{\frac{1}{2}} (III)_j + \varepsilon
  (IV)_{j-1} (\log(2+T_\varepsilon))^{\frac{1}{2}} (V)_j.\]
The inductive hypothesis \eqref{bdd_ih} then gives that this is
bounded by \eqref{ItoVgoal} as desired.

The same strategy bounds the second integral in \eqref{ItoV}.  In
fact, since $\la u\ra\lesssim \la \uu\ra$ in all regions, the argument
can be simplified.  Indeed, we have
 \begin{multline}\label{lowVRu}
   \|\la u\ra S^{\le 7}\Box V_j \cdot S^{\le 7} \partial_{u}
   V_j\|_{\ell^1_\tau \ell^1_{R\le c\tau/2}  L^1_tL^1_r(C^{c,R}_\tau)}
 \lesssim \|\la \tau\ra^{\frac{1}{2}} r^{-\frac{1}{2}} S^{\le 7}
   W_{j-1}\|_{\ell^2_\tau \ell^2_{R\le c\tau/2} L^\infty_tL^\infty_r(C^{c,R}_\tau)}
\\\times\Bigl( \|\la r\ra^{-\frac{1}{4}}r^{-\frac{1}{4}}  \la \uu\ra^{\frac{1}{2}} S^{\le 7}\partial_\uu V_{j-1}
\|_{\ell^\infty_\tau \ell^\infty_R L^2_t L^2_r(C^{c,R}_\tau)}
+\|\la r\ra^{-\frac{1}{4}}r^{-\frac{1}{4}}  \la u\ra^{\frac{1}{2}} S^{\le 7}\partial_u V_{j-1}
\|_{\ell^\infty_\tau \ell^\infty_R L^2_t L^2_r(C^{c,R}_\tau)}
\Bigr)
\\\times\|\la r\ra^{-\frac{1}{4}}r^{-\frac{1}{4}} \la u\ra^{\frac{1}{2}}
   S^{\le 7}\partial_{u} V_j\|_{\ell^2_\tau \ell^2_{R\le c\tau/2}  L^2_tL^2_r(C^{c,R}_\tau)}.
 \end{multline}
 Using a direct analog of \eqref{changespeedR} and \eqref{ptwise_W_crct}, this is
 \[\lesssim \varepsilon ((III)_{j-1}+(I)_{j-1})
   (\log(2+T_\varepsilon))^{\frac{1}{2}} (I)_j,\]
 which is in turn controlled by \eqref{ItoVgoal}.

 And
  \begin{multline}\label{lowVUu}
   \|\la u\ra S^{\le 7}\Box V_j \cdot S^{\le 7} \partial_{u}
   V_j\|_{\ell^1_\tau \ell^1_{U_c\le c\tau/4}  L^1_tL^1_r(C^{c,U_c}_\tau)}
 \lesssim \|\la u_c\ra^{\frac{1}{2}} r^{-\frac{1}{2}} S^{\le 7}
   W_{j-1}\|_{\ell^2_\tau \ell^2_{U_c\le c\tau/4} L^\infty_tL^\infty_r(C^{c,U_c}_\tau)}
 \\\times \Bigl(\|  \la u_c\ra^{-\frac{1}{2}} \la \uu\ra^{\frac{1}{2}} S^{\le 7}\partial_\uu V_{j-1}
 \|_{\ell^\infty_\tau \ell^\infty_{U_c} L^2_t L^2_r(C^{c,U_c}_\tau)} + \|  \la u_c\ra^{-\frac{1}{2}} \la u\ra^{\frac{1}{2}} S^{\le 7}\partial_u V_{j-1}
 \|_{\ell^\infty_\tau \ell^\infty_{U_c} L^2_t L^2_r(C^{c,U_c}_\tau)}\Bigr)
 \\\times\|\la r\ra^{-\frac{1}{4}}r^{-\frac{1}{4}}\la u\ra^{\frac{1}{2}}
  S^{\le 7} \partial_{u} V_j\|_{\ell^2_\tau \ell^2_{U_c\le c\tau/4}  L^2_tL^2_r(C^{c,U_c}_\tau)}.
 \end{multline}
Using the analog of \eqref{changespeedU} and \eqref{ptwise_W_cuct},
this is
\[\lesssim \varepsilon ((IV)_{j-1}+(II)_{j-1})
  (\log(2+T_\varepsilon))^{\frac{1}{2}} (I)_j,\]
which is in turn controlled by \eqref{ItoVgoal} after applying
\eqref{bdd_ih}.  This completes the proof of the boundedness of terms
$(I)_j,\dots, (V)_j$.

\subsubsection{Bound on terms $(VI)_j,\dots, (VIII)_j$}  We now proceed
to considering the high order energy bounds for $V_j$.  We begin by applying \eqref{leghost_v} (with $p=0$) to $S^{\le 10} V_j$ to obtain
\[
(VI)^2_j+(VII)^2_j+(VIII)^2_j \le (C_0\varepsilon)^2 +C\int_4^{T_\varepsilon}\int |\Box S^{\le 10} V_j| |\partial S^{\le 10} V_j| \,dr\,dt. 
\]
Again, we need to show that the last term is bounded by \eqref{ItoVgoal}.
This time, however, we need to apply the product rule more
carefully. We note that on each term, there will be one factor with no
more than half of the 10 total vector fields. As such, we have 
\[|S^{\le 10} \Box V_j| \le r^{-1}|S^{\le 5} W_{j-1}||S^{\le
    10}\partial V_{j-1}| +r^{-1}|S^{\le 10} W_{j-1}||S^{\le 5}\partial
  V_{j-1}|.\]

When $W$ is lower order, we shall initially decompose in speed $c$
regions.  On $C^{c,R}_\tau$ regions, since $\la r\ra\lesssim \tau$, we bound
\begin{multline}\label{lowWR}
   \|r^{-1}S^{\le 5} W_{j-1}S^{\le 10}\partial V_{j-1} S^{\le 10}
   \partial V_j\|_{\ell^1_\tau \ell^1_{R\le c\tau/2} L^1_t L^1_r
     (C^{c,R}_\tau)}
   \lesssim \|\la \uu_c\ra^{\frac{1}{2}} r^{-\frac{1}{2}} S^{\le 5}
   W_{j-1}\|_{\ell^2_\tau \ell^2_{R\le c\tau/2}L^\infty_tL^\infty_r(C^{c,R}_\tau)}
   \\\times  \|r^{-\frac{1}{4}} \la r\ra^{-\frac{1}{4}} S^{\le 10} \partial
       V_{j-1}\|_{\ell^\infty_\tau \ell^\infty_R
         L^2_tL^2_r(C^{c,R}_\tau)}
       \|r^{-\frac{1}{4}} \la r\ra^{-\frac{1}{4}} S^{\le 10} \partial
       V_j\|_{\ell^2_\tau \ell^2_{R\le c\tau/2} L^2_tL^2_r(C^{c,R}_\tau)}
\end{multline}
Arguing as in \eqref{changespeedR}, the last factor satisfies
\begin{multline}\label{highchangespeed} \|r^{-\frac{1}{4}} \la r\ra^{-\frac{1}{4}} S^{\le 10} \partial
       V_j\|_{\ell^2_\tau \ell^2_{R\le c\tau/2}
         L^2_tL^2_r(C^{c,R}_\tau)}
+       \|r^{-\frac{1}{4}} \la r\ra^{-\frac{1}{4}} S^{\le 10} \partial
       V_j\|_{\ell^2_\tau \ell^2_{U_c\le c\tau/4}
         L^2_tL^2_r(C^{c,U_c}_\tau)}
       \\\lesssim (\log(2+T_\varepsilon))^{\frac{1}{2}}  \|r^{-\frac{1}{4}} \la r\ra^{-\frac{1}{4}} S^{\le 10} \partial
       V_j\|_{\ell^\infty_R \ell^2_\tau L^2_tL^2_r(C^{1,R}_\tau)}.
     \end{multline}
     Using this and \eqref{ptwise_W_crct}, we see that the left side of
\eqref{lowWR} is
\[\lesssim \varepsilon (VI)_{j-1}
  (\log(2+T_\varepsilon))^{\frac{1}{2}} (VI)_j,\]
and thus due to \eqref{bdd_ih} is controlled by \eqref{ItoVgoal}.
On the $C^{c,U_c}_\tau$ regions, we instead have 
\begin{multline}\label{lowWUc}
   \|r^{-1}S^{\le 5} W_{j-1}S^{\le 10}\partial V_{j-1} S^{\le 10}
   \partial V_j\|_{\ell^1_\tau \ell^1_{U_c\le c\tau/4} L^1_t L^1_r
     (C^{c,U_c}_\tau)}
   \lesssim \|\la u_c\ra^{\frac{1}{2}} \la r\ra^{-\frac{1}{2}} S^{\le 5}
   W_{j-1}\|_{\ell^2_\tau \ell^2_{U_c\le c\tau/4}L^\infty_tL^\infty_r(C^{c,U_c}_\tau)}
   \\\times  \|\la u_c\ra^{-\frac{1}{2}} S^{\le 10} \partial
       V_{j-1}\|_{\ell^\infty_\tau \ell^\infty_{U_c}
         L^2_tL^2_r(C^{c,U_c}_\tau)}
       \|r^{-\frac{1}{4}} \la r\ra^{-\frac{1}{4}} S^{\le 10} \partial
       V_j\|_{\ell^2_\tau \ell^2_{U_c\le c\tau/4} L^2_tL^2_r(C^{c,U_c}_\tau)}.
\end{multline}
By \eqref{ptwise_W_cuct} and \eqref{highchangespeed}, this is
\[\lesssim \varepsilon (VII)_{j-1}
  (\log(2+T_\varepsilon))^{\frac{1}{2}} (VI)_j,\]
which is in turn bounded by \eqref{ItoVgoal} upon applying \eqref{bdd_ih}.

When $\partial V$ is lower order, we instead use the speed 1
decomposition.  Away from the light cone, since $\la r\ra\le \tau$ on
$C^{1,R}_\tau$ with $R\le \tau/2$, we obtain
\begin{multline*}
    \|r^{-1}S^{\le 5} \partial V_{j-1}S^{\le 10}W_{j-1} S^{\le 10}
    \partial V_j|\|_{\ell^1_R \ell^1_{\tau\ge 2R} L^1_t L^1_r (C^{1,R}_\tau)}
  \\\lesssim \|\la \uu\ra S^{\le 5} \partial V_{j-1}\|_{\ell^\infty_R
    \ell^2_{\tau\ge 2R}L^\infty_tL^\infty_r(C^{1,R}_\tau)} \|r^{-1} S^{\le 10}
    W_{j-1}\|_{\ell^2_R\ell^2_\tau L^2_tL^2_r(C^{1,R}_\tau)}
    \\\times \|\la
    r\ra^{-\frac{1}{2}} S^{\le 10} \partial
    V_j\|_{\ell^\infty_R\ell^\infty_\tau L^2_tL^2_r(C^{1,R}_\tau)}.
\end{multline*}
Here we have used the additional power $\la \uu\ra^{-\frac{1}{2}}\le
\la r\ra^{-\frac{1}{2}}$ to control the remaining dyadic sum over
$R$.  We may apply \eqref{ptwise_V_crt} to then see that this is
$\lesssim \varepsilon (XI)_{j-1} (VI)_{j}$, which by \eqref{bdd_ih} is
better than the required bound \eqref{ItoVgoal}.

When $\partial V$ is lower order and we are near the speed $1$ light
cone, we instead have
\begin{multline*}
    \|r^{-1}S^{\le 5} \partial V_{j-1}S^{\le 10}W_{j-1} S^{\le 10}
    \partial V_j|\|_{\ell^1_U \ell^1_{\tau \ge 4U} L^1_t L^1_r (C^{1,U}_\tau)}
  \\\lesssim \|\la u\ra S^{\le 5} \partial V_{j-1}\|_{\ell^\infty_\tau
    \ell^\infty_{U\ge \tau/4}L^\infty_tL^\infty_r(C^{1,U}_\tau)} \|\la
  u\ra^{-\frac{1}{2}}r^{-\frac{1}{2}} S^{\le 10}
    W_{j-1}\|_{\ell^\infty_U \ell^2_\tau L^2_tL^2_r(C^{1,U}_\tau)}
    \\\times \|\la
    r\ra^{-\frac{1}{2}} S^{\le 10} \partial
    V_j\|_{\ell^2_\tau \ell^2_{U\le \tau/4}  L^2_tL^2_r(C^{1,U}_\tau)}.
\end{multline*}
Here we have again gained $\ell^2_U$ summability from the extra factor
of $\la u\ra^{-\frac{1}{2}}$.  We note that the last factor satisfies
\begin{equation}\label{UtoR}\begin{split}\|\la
    r\ra^{-\frac{1}{2}} S^{\le 10} \partial
    V_j\|_{\ell^2_\tau \ell^2_{U\le \tau/4}  L^2_tL^2_r(C^{1,U}_\tau)}
    &\lesssim \|\la
    r\ra^{-\frac{1}{2}} S^{\le 10} \partial
      V_j\|_{\ell^2_R \ell^2_{\tau}  L^2_tL^2_r(C^{1,R}_\tau)}\\
  &\lesssim (\log(2+T_\varepsilon))^{\frac{1}{2}}\|\la
    r\ra^{-\frac{1}{2}} S^{\le 10} \partial
      V_j\|_{\ell^\infty_R \ell^2_{\tau}  L^2_tL^2_r(C^{1,R}_\tau)}.
    \end{split}
  \end{equation}
  It follows from \eqref{ptwise_V_cut} that
\[\|r^{-1}S^{\le 5} \partial V_{j-1}S^{\le 10}W_{j-1} S^{\le 10}
    \partial V_j|\|_{\ell^1_U \ell^1_{\tau \ge 4U} L^1_t L^1_r
      (C^{1,U}_\tau)}
    \lesssim \varepsilon (X)_{j-1}
    (\log(2+T_\varepsilon))^{\frac{1}{2}} (VI)_j.\]
  This gives the bound by \eqref{ItoVgoal} upon using \eqref{bdd_ih}.

  \subsubsection{Bound on terms $(IX)_j,\dots, (XII)_j$}
Applying \eqref{rp_w}, \eqref{hardy}, and \eqref{hardy_mixed} to
$S^{\le 10} W_j$ gives
\[
(IX)^2_j+\dots+(XII)^2_j \le (C_0\varepsilon)^2
+C\int_4^{T_\varepsilon}\int r|\Box_c S^{\le 10} W_j||\duuc S^{\le 10} W_j|\,dr\,dt.
\]
We again seek to show that the last term is bounded by \eqref{ItoVgoal}.
The decomposition of the integral will be at speed 1 throughout. 
The product rule yields
\[|S^{\le 10}\Box_c  W_j| \le r^{-1}|S^{\le 5} \partial V_{j-1}||S^{\le 10} \partial V_{j-1}|.\]

Away from the speed $1$ light cone, we have
\begin{multline*}
  \| r S^{\le 10} \Box_c W_j \cdot S^{\le 10}\duuc W_j\|_{\ell^1_\tau
    \ell^1_{R\le \tau/2} L^1_tL^1_r(C^{1,R}_\tau)}
  \lesssim \|\la \uu\ra S^{\le 5} \partial V_{j-1}\|_{\ell^\infty_R
    \ell^\infty_{\tau\ge 2R} L^\infty_tL^\infty_r(C^{1,R}_\tau)}
\\\times  \|\la r\ra^{-\frac{1}{2}} S^{\le 10}\partial
  V_{j-1}\|_{\ell^\infty_R \ell^2_\tau L^2_tL^2_r(C^{1,R}_\tau)}
  \|S^{\le 10} \duuc W_j\|_{\ell^2_\tau \ell^2_R L^2_tL^2_r(C^{1,R}_\tau)}.
\end{multline*}
Here, we have used that $\la r\ra\lesssim \la \uu\ra$.  Moreover, the
factor of $\la \uu\ra^{-\frac{1}{2}}$ is used to control the remaining
$\ell^2_R$ summation.  As \eqref{ptwise_V_crt} gives that this is $\lesssim \varepsilon (VI)_{j-1} (XI)_j,$
\eqref{bdd_ih} shows that these terms are controlled by
\eqref{ItoVgoal} (without the logarithmic factor, in fact).

Near the speed $1$ light cone, we use the Schwarz inequality
to bound
\begin{multline*}
  \| r S^{\le 10} \Box_c W_j \cdot S^{\le 10}\duuc W_j\|_{\ell^1_\tau
    \ell^1_{U\le \tau/4} L^1_tL^1_r(C^{1,U}_\tau)}
  \lesssim \|\la u\ra S^{\le 5} \partial V_{j-1}\|_{\ell^\infty_U
    \ell^\infty_{\tau\ge 4U} L^\infty_tL^\infty_r(C^{1,U}_\tau)}
\\\times  \|\la r\ra^{-\frac{1}{2}} S^{\le 10}\partial
  V_{j-1}\|_{\ell^2_U \ell^2_\tau L^2_tL^2_r(C^{1,U}_\tau)}
  \|r^{\frac{1}{2}}\la u\ra^{-\frac{1}{2}}S^{\le 10} \duuc W_j\|_{\ell^\infty_U \ell^2_\tau L^2_tL^2_r(C^{1,U}_\tau)}.
\end{multline*}
The additional $\la
u\ra^{-\frac{1}{2}}$ allowed us to absorb a square summation in $U$.  Upon
applying \eqref{UtoR} and \eqref{ptwise_V_cut}, these final terms are
\[\lesssim \varepsilon (\log(2+T_\varepsilon))^{\frac{1}{2}}(VI)_{j-1}
  (X)_j.\]
Thus, \eqref{bdd_ih} gives that they are controlled by
\eqref{ItoVgoal}.  This completes the proof of \eqref{bdd_goal}.

\subsection{Convergence} We now show that the sequence $((V_j,W_j))$
converges by showing that it is Cauchy in an appropriate norm.  With this aim, we set
\begin{multline}
\label{A}
  A_j = 
  \|r^{-\frac{1}{4}}\la r\ra^{-\frac{1}{4}} \la u\ra^{\frac{1}{2}} S^{\le 7} \du
  (V_j-V_{j-1})\|_{\ell^\infty_R \ell^2_\tau L^2_tL^2_r(C^{1,R}_\tau)}  
  + \|\la u_c\ra^{-\frac{1}{2}}\la u\ra ^{\frac{1}{2}} S^{\le 7}
  \du (V_j-V_{j-1})\|_{\ell^\infty_{U_c}\ell^2_\tau L^2_tL^2_r(C^{c,U_c}_\tau)}
  \\+\|r^{-\frac{1}{4}}\la r\ra^{-\frac{1}{4}} \la\uu\ra^{\frac{1}{2}} S^{\le 7} \duu
  (V_j-V_{j-1})\|_{\ell^\infty_R \ell^2_\tau L^2_tL^2_r(C^{1,R}_\tau)}  
  + \|\la u_c\ra^{-\frac{1}{2}}\la \uu\ra^{\frac{1}{2}} S^{\le 7}
  \duu (V_j-V_{j-1})\|_{\ell^\infty_{U_c}\ell^2_\tau L^2_tL^2_r(C^{c,U_c}_\tau)}
  \\ +
  \|\la u\ra^{-\frac{1}{2}}\la \uu\ra^{\frac{1}{2}} S^{\le 7}
  \duu (V_j-V_{j-1})\|_{\ell^\infty_{U}\ell^2_\tau L^2_tL^2_r(C^{1,U}_\tau)}
  +\|r^{-\frac{1}{4}}\la r\ra^{-\frac{1}{4}} S^{\le 10} \partial
  (V_j-V_{j-1})\|_{\ell^\infty_R \ell^2_\tau L^2_tL^2_r(C^{1,R}_\tau)}  
  \\+ \|\la u_c\ra^{-\frac{1}{2}} S^{\le 10}
  \partial (V_j-V_{j-1})\|_{\ell^\infty_{U_c}\ell^2_\tau L^2_tL^2_r(C^{c,U_c}_\tau)}
  +\|\la u\ra^{-\frac{1}{2}}S^{\le 10}
  \duu (V_j-V_{j-1})\|_{\ell^\infty_{U}\ell^2_\tau L^2_tL^2_r(C^{1,U}_\tau)}
  \\
  +\|r^{\frac{1}{2}}\la u\ra^{-\frac{1}{2}} S^{\le 10}
  \duuc (W_j-W_{j-1})\|_{\ell^\infty_{U}\ell^2_\tau L^2_tL^2_r(C^{1,U}_\tau)}
  +\|r^{-\frac{1}{2}}\la u\ra^{-\frac{1}{2}} S^{\le 10}
  (W_j-W_{j-1})\|_{\ell^\infty_{U}\ell^2_\tau L^2_tL^2_r(C^{1,U}_\tau)}
  \\ +\|S^{\le 10} \duuc (W_j-W_{j-1})\|_{\ell^2_R \ell^2_\tau L^2_tL^2_r(C^{1,R}_\tau)}  
  +\|r^{-1} S^{\le 10}  (W_j-W_{j-1})\|_{\ell^2_R \ell^2_\tau L^2_tL^2_r(C^{c,R}_\tau)},
\end{multline}
and we will show that, for each $j$,
\begin{equation}
    A_j\leq \frac{1}{2}A_{j-1}.
\end{equation}
We note that
\[\Box(V_j-V_{j-1}) = r^{-1} \Bigl((W_{j-1}-W_{j-2})\partial_t V_{j-1} + W_{j-2}\partial_t(V_{j-1}- V_{j-2})\Bigr)\]
and
\[\Box_c(W_j-W_{j-1}) =  r^{-1} \partial_t(V_{j-1}- V_{j-2})(\partial_t V_{j-1}+\partial_t V_{j-2}).\]
For this system, we call upon the same arguments as in the proof
of \eqref{bdd_goal}.  Doing so yields
\[A_j\lc A_{j-1}(M_{j-1}+M_{j-2}),\]
and applying \eqref{bdd_goal}
with $\varepsilon$ sufficiently small completes the proof.

\bibliography{exterior}

\begin{thebibliography}{10}

\bibitem{Alinhac_ghostweight}
S.~Alinhac.
\newblock The null condition for quasilinear wave equations in two space dimensions {I}.
\newblock {\em Invent. Math.}, 145(3):597--618, 2001.

\bibitem{bechtold_m}
Benjamin Bechtold and Jason Metcalfe.
\newblock On a weakly null multiple speed system of wave equations.
\newblock Forthcoming.

\bibitem{Christodoulou}
Demetrios Christodoulou.
\newblock Global solutions of nonlinear hyperbolic equations for small initial data.
\newblock {\em Comm. Pure Appl. Math.}, 39(2):267--282, 1986.

\bibitem{dafermos_rodnianski}
Mihalis Dafermos and Igor Rodnianski.
\newblock A new physical-space approach to decay for the wave equation with applications to black hole spacetimes.
\newblock In {\em X{VI}th {I}nternational {C}ongress on {M}athematical {P}hysics}, pages 421--432. World Sci. Publ., Hackensack, NJ, 2010.

\bibitem{Hidano-Wang-Yokoyama}
Kunio Hidano, Chengbo Wang, and Kazuyoshi Yokoyama.
\newblock On almost global existence and local well posedness for some 3-{D} quasi-linear wave equations.
\newblock {\em Adv. Differential Equations}, 17(3-4):267--306, 2012.

\bibitem{katayama}
Soichiro Katayama.
\newblock Lifespan for radially symmetric solutions to systems of semilinear wave equations with multiple speeds.
\newblock {\em Osaka J. Math.}, 45(3):691--717, 2008.

\bibitem{KSS}
Markus Keel, Hart~F. Smith, and Christopher~D. Sogge.
\newblock Almost global existence for some semilinear wave equations.
\newblock {\em J. Anal. Math.}, 87:265--279, 2002.
\newblock Dedicated to the memory of Thomas H. Wolff.

\bibitem{KSS3}
Markus Keel, Hart~F. Smith, and Christopher~D. Sogge.
\newblock Almost global existence for quasilinear wave equations in three space dimensions.
\newblock {\em J. Amer. Math. Soc.}, 17(1):109--153, 2004.

\bibitem{Klainerman}
S.~Klainerman.
\newblock The null condition and global existence to nonlinear wave equations.
\newblock In {\em Nonlinear systems of partial differential equations in applied mathematics, {P}art 1 ({S}anta {F}e, {N}.{M}., 1984)}, volume~23 of {\em Lectures in Appl. Math.}, pages 293--326. Amer. Math. Soc., Providence, RI, 1986.

\bibitem{klainermanSob}
Sergiu Klainerman.
\newblock Uniform decay estimates and the {L}orentz invariance of the classical wave equation.
\newblock {\em Comm. Pure Appl. Math.}, 38(3):321--332, 1985.

\bibitem{KlSid}
Sergiu Klainerman and Thomas~C. Sideris.
\newblock On almost global existence for nonrelativistic wave equations in {$3$}{D}.
\newblock {\em Comm. Pure Appl. Math.}, 49(3):307--321, 1996.

\bibitem{Lindblad}
Hans Lindblad.
\newblock On the lifespan of solutions of nonlinear wave equations with small initial data.
\newblock {\em Comm. Pure Appl. Math.}, 43(4):445--472, 1990.

\bibitem{mns2}
Jason Metcalfe, Makoto Nakamura, and Christopher~D. Sogge.
\newblock Global existence of quasilinear, nonrelativistic wave equations satisfying the null condition.
\newblock {\em Japan. J. Math. (N.S.)}, 31(2):391--472, 2005.

\bibitem{mns1}
Jason Metcalfe, Makoto Nakamura, and Christopher~D. Sogge.
\newblock Global existence of solutions to multiple speed systems of quasilinear wave equations in exterior domains.
\newblock {\em Forum Math.}, 17(1):133--168, 2005.

\bibitem{M-Rhoads}
Jason Metcalfe and Taylor Rhoads.
\newblock Long-time existence for systems of quasilinear wave equations.
\newblock {\em Matematica}, 2(1):37--84, 2023.

\bibitem{MS3}
Jason Metcalfe and Christopher~D. Sogge.
\newblock Hyperbolic trapped rays and global existence of quasilinear wave equations.
\newblock {\em Invent. Math.}, 159(1):75--117, 2005.

\bibitem{MS}
Jason Metcalfe and Christopher~D. Sogge.
\newblock Long-time existence of quasilinear wave equations exterior to star-shaped obstacles via energy methods.
\newblock {\em SIAM J. Math. Anal.}, 38(1):188--209, 2006.

\bibitem{ms_mathz}
Jason Metcalfe and Christopher~D. Sogge.
\newblock Global existence of null-form wave equations in exterior domains.
\newblock {\em Math. Z.}, 256(3):521--549, 2007.

\bibitem{MST}
Jason Metcalfe, Jacob Sterbenz, and Daniel Tataru.
\newblock Local energy decay for scalar fields on time dependent non-trapping backgrounds.
\newblock {\em Amer. J. Math.}, 142(3):821--883, 2020.

\bibitem{M-Stewart}
Jason Metcalfe and Alexander Stewart.
\newblock On a system of weakly null semilinear wave equations.
\newblock {\em Anal. Math. Phys.}, 12(5):Paper No. 125, 23, 2022.

\bibitem{MTT}
Jason Metcalfe, Daniel Tataru, and Mihai Tohaneanu.
\newblock Price's law on nonstationary space-times.
\newblock {\em Adv. Math.}, 230(3):995--1028, 2012.

\bibitem{Morawetz_scaling}
Cathleen~S. Morawetz.
\newblock The decay of solutions of the exterior initial-boundary value problem for the wave equation.
\newblock {\em Comm. Pure Appl. Math.}, 14:561--568, 1961.

\bibitem{Morawetz}
Cathleen~S. Morawetz.
\newblock Time decay for the nonlinear {K}lein-{G}ordon equations.
\newblock {\em Proc. Roy. Soc. London Ser. A}, 306:291--296, 1968.

\bibitem{Ohta}
Masahito Ohta.
\newblock Counterexample to global existence for systems of nonlinear wave equations with different propagation speeds.
\newblock {\em Funkcial. Ekvac.}, 46(3):471--477, 2003.

\bibitem{Sideris}
Thomas~C. Sideris.
\newblock Nonresonance and global existence of prestressed nonlinear elastic waves.
\newblock {\em Ann. of Math. (2)}, 151(2):849--874, 2000.

\bibitem{Sideris-Thomases}
Thomas~C. Sideris and Becca Thomases.
\newblock Local energy decay for solutions of multi-dimensional isotropic symmetric hyperbolic systems.
\newblock {\em J. Hyperbolic Differ. Equ.}, 3(4):673--690, 2006.

\bibitem{Sideris-Tu}
Thomas~C. Sideris and Shu-Yi Tu.
\newblock Global existence for systems of nonlinear wave equations in 3{D} with multiple speeds.
\newblock {\em SIAM J. Math. Anal.}, 33(2):477--488, 2001.

\bibitem{Sogge}
Christopher~D. Sogge.
\newblock Global existence for nonlinear wave equations with multiple speeds.
\newblock In {\em Harmonic analysis at {M}ount {H}olyoke ({S}outh {H}adley, {MA}, 2001)}, volume 320 of {\em Contemp. Math.}, pages 353--366. Amer. Math. Soc., Providence, RI, 2003.

\bibitem{Sterbenz}
Jacob Sterbenz.
\newblock Angular regularity and {S}trichartz estimates for the wave equation.
\newblock {\em Int. Math. Res. Not.}, (4):187--231, 2005.
\newblock With an appendix by Igor Rodnianski.

\end{thebibliography}

\end{document}